\def\Im{\operatorname{Im}}
\def\Re{\operatorname{Re}}
\newcommand{\Z}{\mathbb{Z}}
\newcommand{\C}{\mathbb{C}}
\newcommand{\N}{\mathbb{N}}
\newcommand{\QQ}{\mathbb{Q}}
\newcommand{\HH}{\mathbb{H}}
\newcommand{\R}{\mathbb{R}}
\newcommand{\E}{\mathcal{E}}
\newcommand{\conj}{\overline}
\newcommand{\bfo}{\mathbf{1}}
\newcommand{\ord}{\mathrm{ord}}
\theoremstyle{plain}
\newtheorem{theorem}{Theorem}[section]
\newtheorem{proposition}{Proposition}[section]
\theoremstyle{remark}
\newtheorem*{remark}{Remark}
\theoremstyle{definition}
\theoremstyle{definition}
\author[Herrero]{Sebasti\'an Herrero}
\address{
Sebasti\'an Herrero,
Department of Mathematical Sciences, 
Chalmers University of Technology and University of Gothenburg,
SE-412 96 Gothenburg, 
Sweden.}
\email{sebastian.herrero.m@gmail.com}
\author[von Pippich]{Anna-Maria von Pippich}
\address{
Anna-Maria von Pippich,
Fachbereich Mathematik, 
Technische Universit\"at Darmstadt,
Schlo{\ss}gartenstra{\ss}e 7, 
D-64289 Darmstadt, 
Germany.
}
\email{pippich@mathematik.tu-darmstadt.de}
\begin{document}

\title[Mock modular forms whose shadows are Eisenstein series]{Mock modular forms whose shadows are Eisenstein series of integral weight} 
%\thanks{Grants or other notes
%about the article that should go on the front page should be
%placed here. General acknowledgments should be placed at the end of the article.}

%\titlerunning{Short form of title}        % if too long for running head

%\date{ }

%\author{S. Herrero \and  \"O. Imamo\={g}lu  \and A.-M. von Pippich  \and \'A. T\'oth}

%\authorrunning{Short form of author list} % if too long for running head

\maketitle

\begin{abstract}
The purpose of this article is to give a simple and explicit construction of mock modular forms whose shadows are Eisenstein series of arbitrary integral weight, level, and character. As application, we construct forms whose shadows are Hecke's Eisenstein series of weight one associated to imaginary quadratic fields, recovering some results by Kudla, Rapoport and Yang (1999), and Schofer (2009), and forms whose shadows equal $\Theta^{2k}(z)$ for $k\in \{1,2,3,4\}$, where 
$\Theta(z)$ denotes Jacobi's theta function.
\end{abstract}

%TO DO 
%1 Write nice introduction
%2 Check the bibliography.
%3 Check the computations (once more).
%4 Write nice abstract
%5 choose a journal to submit this paper: International Mathematics Research Notices, Mathematische Zeitschrift?

\section{Introduction}

\subsection{Holomorphic, harmonic and mock modular forms} 
Throughout this article, let $k$, $N$ be integers with $N\geq 1$ and let $\chi$ denote a Dirichlet character mod $N$ satisfying $\chi(-1)=(-1)^k$.
Further, let $\HH=\{z=x+iy\in \C: x,y\in\R, y>0\}$ 
denote the complex upper-half plane and let $$\Gamma_0(N):=\left\{\pm \begin{pmatrix}a & b \\ c& d\end{pmatrix}\in \mathrm{PSL}_2(\Z):c\equiv 0 \, (\mbox{mod } N)\right\}.$$
 A modular form (resp.~cusp form) of weight $k$, level $N$ and character $\chi$ is a holomorphic function $f:\HH\to \C$ satisfying the transformation property
\begin{equation}\label{eq-modularity}
f\left(\gamma z\right)=\chi(d)(c\tau+d)^kf(z) \, \mbox{ for all }\gamma=\pm \bigl(\begin{smallmatrix}a & b \\ c& d\end{smallmatrix}\bigr)\in \Gamma_0(N),
\end{equation} 
where $\gamma z:=(az+b)(cz +d)^{-1}$, which is also holomorphic (resp.~vanishes) at every cusp of $\Gamma_0(N)$. We denote by $M_k(N,\chi)$ the finite dimensional $\C$-vector space of holomorphic modular forms of weight $k$, level $N$ and character $\chi$, and by $S_k(N,\chi)$ the subspace of cusp forms. Similarly, a weakly harmonic Maass form of weight $k$, level $N$ and character $\chi$ is a real-analytic function  $f:\HH\to \C$ satisfying \eqref{eq-modularity}, with
$\Delta_k(f)=0$, where 
$$\Delta_k:=y^2\left(\frac{\partial^2}{\partial x^2}+\frac{\partial^2}{\partial y^2}\right)-iky\left(\frac{\partial}{\partial x}+i\frac{\partial}{\partial y}\right)$$
is the weight $k$ hyperbolic Laplacian, and with at most linear exponential growth at the cusps of $\Gamma_0(N)$. We denote by $H_{k}(N,\chi)$ the space of weakly harmonic Maass forms of weight $k$, level $N$ and character $\chi$, whose images under the differential operator
$$\xi_{k}:=2iy^{k}\overline{\partial_{\overline{z}}}$$
are holomorphic at every cusp of $\Gamma_0(N)$. By introducing certain multiplier system one can define holomorphic modular forms and harmonic Maass forms of half integral weight (see, e.g., \cite{BFOR} Chapter 4), but these will play a minor role in this paper.\\
The differential operator $\xi_{k}$ is related to the Laplacian $\Delta_k$ by the formula
\begin{equation}\label{eq-Delta-xi}
\Delta_k=\xi_{2-k}\circ \xi_k.
\end{equation}
It is known that every function $F(z)\in H_{2-k}(N,\chi)$ has a Fourier expansion at the cusp $\infty$ of the form
\begin{equation}\label{Four-exp-harmonic}
F(z)=\sum_{n= n_0}^{\infty}c^+(n)q^n-\sum_{n=0}^{\infty}\overline{c(n)}\beta_{2-k}(n,y)q^{-n},
\end{equation}
where $n_0\in \Z$ and $q:=e^{2\pi i z}$. Here, for $n>0$, we have
\begin{align}\label{def_beta1}
\beta_{2-k}(n,y):=\int_y^{\infty}e^{-4\pi nu}u^{k-2}du,
\end{align}
and, for $n=0$, 
\begin{align}\label{def_beta2}
\beta_{2-k}(0,y):=\left\{ 
\begin{array}{ll}
(1-k)^{-1}y^{k-1}, & \mbox{ if } k\neq 1,\\
-\log(y),&  \mbox{ if } k=1.
\end{array}\right.
\end{align}
The coefficients $c^+(n)\in\C$ (resp.~$c(n)\in\C$) are called holomorphic (resp.~non-holomorphic) coefficients of $F(z)$. 
One has
$$\xi_{2-k}(F)(z)=\sum_{n=0}^{\infty}c(n)q^n \in M_k(N,\overline{\chi}),$$
and one says that the function
$$\sum_{n= n_0}^{\infty}c^+(n)q^n$$
is a mock modular form of weight $2-k$, level $N$ and character $\chi$ with shadow $\xi_{2-k}(F)$. Note that a mock modular form is determined by its shadow up to addition of a weakly holomorphic modular form of weight $2-k$ (this is a meromorphic modular form whose poles are supported at cusps).
 
In general, it is a difficult problem to describe the Fourier coefficients of mock modular forms. In the case of half integral weight, we have the influential work of Zwegers \cite{Z} that involves the mock-theta functions of Ramanujan, and also  the work of Bringmann and Ono \cite{BO}, Bruinier and Ono \cite{BruOn}, and Bruinier \cite{Bru}, among others. An explicit construction of a mock modular form whose shadow equals $\Theta^3(z)$, where $\Theta(z)$ denotes Jacobi's theta function, has been constructed by Rhoades and Waldherr in \cite{RW}. 

In the case of integral weight $2-k$ with $k\geq 2$, we have the work of Bruinier, Ono and Rhoades \cite{BOR} relating the non-constant Fourier coefficients of mock modular forms of weight $2-k$ to the coefficients of weakly holomorphic modular forms of weight $k$. More precisely, their result states that for $F\in H_{2-k}(N,\chi)$ with Fourier expansion $\eqref{Four-exp-harmonic}$, one has
$$D^{k-1}(F)=\sum_{n=n_0}^{\infty}c^+(n)n^{k-1}q^n\in M_k^{!}(N,\chi),$$
where $D:=\frac{1}{2\pi i }\partial_z$ and $M_k^{!}(N,\chi)$ denotes the space of weakly holomorphic modular forms of weight $k$, level $N$ and character $\chi$ (see,  \begin{it}loc.~cit.\end{it}, Theorem 1.1, p.~675). This relation  determines the coefficients $c^+(n)$ for $n\neq 0$ provided one has an explicit formula for the Fourier coefficients of $D^{k-1}(F)$. Unfortunately, in general, this seems to be a difficult problem (see, e.g., \begin{it}loc.~cit.\end{it}, pp.~675-676, where some examples are discussed).

In the case of weight one (the \begin{it}self dual \end{it}case), there is the work of  Duke and Li \cite{DL15}, where the authors investigate certain mock modular forms associated to imaginary quadratic fields, whose shadows are cuspidal new forms, and relate their Fourier coefficients to logarithms of  absolute values of units in the ring of integers of the corresponding Hilbert class field. We would also like to mention the recent work of Li \cite{YL17}, where he constructs a mock modular form  whose shadow is an Eisenstein series of weight one studied by Hecke, which is a holomorphic vector-valued modular form for the Weil representation.

In this paper, we focus exclusively on the simplest case concerning mock modular forms whose shadows are Eisenstein series of integral weight. One of the first examples of such a form is given by the series
$$
G_2(z):=\frac{\pi}{3}-8\pi\sum_{n=1}^{\infty}\sigma_1(n)q^n,
$$
where $\sigma_1(n):=\sum_{d\mid n}d$. The function $G_2(z)$ is a mock modular form of weight 2 with shadow equal to 1 (one can consider 1 as an Eisenstein series of weight zero and level one). Mock modular forms whose shadows are Eisenstein series of weight $k\geq 4$ and level one have been constructed by Bringmann, Guerzhoy, Kent and Ono  in \cite{BGKO} (see Theorem 2.1, p.~1100). Here, the authors needed to control the constant term of any given harmonic Maass form in order to compute a certain extended Petersson inner product between weakly holomorphic modular forms. These computations are also contained in \cite{BFOR} (see Theorem 6.15, p.~104).

\subsection{Purpose of the article} The purpose of this article is to give a simple and explicit construction of mock modular forms whose shadows are Eisenstein series of arbitrary integral weight $k\geq 1$, level $N\geq 1$ and character $\chi$. To be more precise, recall that, given $f\in M_k(N,\chi)$ and $g\in S_k(N,\chi)$, one can define the Petersson inner product
$$\langle f,g\rangle :=\frac{1}{[\Gamma_0(1):\Gamma_0(N)]}\int_{\Gamma_0(N)\backslash \HH}f(z)\overline{g(z)}y^{k-2} dxdy.$$
Then, a classical result of the theory of modular forms states that the subspace
$$N_k(N,\chi):=\{f\in M_k(N,\chi):\langle f,g\rangle=0 \mbox{ for all }g\in S_k(N,\chi)\}$$
is generated by certain Eisenstein series $E_k^{\psi,\varrho,t}(z)$, where $\psi$ and $\varrho$ are Dirichlet characters mod $L$ and $M$, respectively, satisfying $\psi \varrho=\chi$, and $t\geq 1$ is an integer with $tLM\mid N$.  We refer to Section \ref{section-hol-Eis} for the precise definition of $E_k^{\psi,\varrho,t}(z)$. Note that one can assume that $\chi(-1)=(-1)^k$, since otherwise $M_k(N,\chi)=\{0\}$.

In this paper we shall describe a simple and explicit construction of a weight $2-k$ mock modular form
\begin{equation}\label{eq-Four-exp-mock-G}
G_{2-k}^{\psi,\varrho,t}(z)=\sum_{n=0}^{\infty}c^+_{2-k}(n,t,\psi,\varrho)q^n,  
%\mbox{ where } q:=e^{2\pi i z}, q was already defined before
\end{equation}
whose shadow equals $E_k^{\psi,\varrho,t}(z)$. If $k\geq 2$, the construction of the corresponding harmonic Maass form is fairly elementary and it is obtained by evaluating certain classical non-holomorphic Eisenstein series (going back, at least, to Hecke) at special points. 
Only the case $k=1$ requires some minor extra manipulations, since one has to evaluate a derivative of these non-holomorphic Eisenstein series, following ideas of Kudla, Rapoport and Yang \cite{KRY99}. This is done in detail in Section \ref{sect-const-preimage}.  The harmonic Maass form constructed here has only polynomial growth at the cusps of $\Gamma_0(N)$. If $k\geq 2$, these properties determine the mock modular form $G_{2-k}^{\psi,\varrho,t}(z)$ uniquely. If $k=1$, this mock modular form is unique up to addition of a form in $M_1(N,\overline{\chi})$. 

In order to state our results more precisely, let us introduce some notation. By $\bfo_N$ we denote the trivial Dirichlet character mod $N$. Furthermore, given a Dirichlet character $\chi$ mod $N$, we write $\chi^0$ for the unique primitive Dirichlet character inducing $\chi$. We denote by  $m_{\chi}$ the conductor of $\chi$ and we define $\ell_{\chi}:=N/m_{\chi}$. We also denote by
$$W(\chi^0):=\sum_{n=1}^{m_{\chi}}\chi^0(n)e^{2\pi i n/m_{\chi}}$$
the Gauss sum of $\chi^0$, and by
$$L(s,\chi):=\sum_{n=1}^{\infty}\chi(n)n^{-s}$$ 
the Dirichlet $L$-function associated to $\chi$, which converges absolutely for $s\in \C$ with $\Re(s)>1$. We convey that $f(x)=0$ if $x\not \in \N$, for any given arithmetic function $f:\N\to \C$.

Throughout the article, we let $\psi$ and $\varrho$ denote Dirichlet characters mod $L$ and $M$, respectively, satisfying 
\begin{equation}\label{eq-psi-rho-k-compatible}
\psi(-1)\varrho(-1)=(-1)^k,
\end{equation}
and we let $t\geq 1$ be an integer.
We set
\begin{align}\label{def_sigma_psi_rho}
\sigma^{\psi,\varrho}_{k-1}(n):=\sum_{0<c\mid n}\psi\left(\frac{n}{c}\right)c^{k-1}\sum_{0<d\mid \mathrm{gcd}(\ell_{\varrho},c)}d\, \mu\left(\frac{\ell_{\varrho}}{d}\right)
\overline{\varrho^0}\left(\frac{\ell_{\varrho}}{d}\right)\varrho^0\left(\frac{c}{d}\right).
\end{align}

The following are the main results of this paper.
\begin{theorem}\label{teo-fourier-exp-mock-geq2}
%Let $\psi,\varrho$ be Dirichlet characters mod $L$ and $M$ respectively, with $\psi(-1)\varrho(-1)=(-1)^k$, and let $t\geq 1$ be an integer. 
Assume $k>2$, or $k=2$ and $(\psi,\varrho)\neq (\bfo_L,\bfo_M)$. Put
$$c_{2-k}^+(0,t,\psi,\varrho):=\left\{
\begin{array}{ll}
\dfrac{ 2^{2-k}\pi i^{k-2}}{(tM)^{k-1}(k-1)}L(k-1,\overline{\psi})\prod_{p\mid M}(1-p^{-1}), & \mbox{ if } \varrho=\bfo_M,\\
0, & \mbox{ if } \varrho\neq \bfo_M,
\end{array}
\right.$$
and for $n>0$ put
$$c_{2-k}^+(n,t,\psi,\varrho):=
\frac{ 2^{2-k} \pi i^{k-2}}{M^k(k-1)}W(\varrho^0) n^{1-k}\sigma_{k-1}^{\overline{\psi},\overline{\varrho}}\left(\frac{n}{t}\right).$$
Then, the series \eqref{eq-Four-exp-mock-G} defines a mock modular form of weight $2-k$, level $tLM$ and character $\overline{\psi\varrho}$ with shadow $E_k^{\psi,\varrho,t}(z)$.
\end{theorem}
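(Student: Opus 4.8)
The plan is to realize the mock modular form as the holomorphic part of an explicit harmonic Maass form built from a classical non-holomorphic Eisenstein series, and then read off its Fourier expansion. First I would recall the weight $2-k$ real-analytic Eisenstein series attached to $\Gamma_0(L)$ (or $\Gamma_0(N)$) with character $\psi$, twisted by $\varrho$ and by the scaling $z\mapsto tz$ — essentially a series of the form $\sum_{\gamma} \psi(\gamma)\,\varrho(\ldots)\,(cz+d)^{-(2-k)}|cz+d|^{-2s}$ evaluated (for $k\geq 2$) at the point $s=0$ or $s = k-1$ where it is holomorphic in $s$. By the standard theory (Hecke; see the discussion in the introduction) this series $\mathcal{F}(z)$ lies in $H_{2-k}(tLM,\overline{\psi\varrho})$, it has at most polynomial growth at the cusps, and $\xi_{2-k}(\mathcal{F})$ is a holomorphic weight $k$ Eisenstein series. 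The first genuine task is to normalize the Eisenstein series and the parameters $\psi,\varrho,t$ so that $\xi_{2-k}(\mathcal{F})$ is \emph{exactly} $E_k^{\psi,\varrho,t}(z)$ with the normalization fixed in Section~\ref{section-hol-Eis}; this is bookkeeping with Gauss sums $W(\varrho^0)$ and the factor $2^{2-k}\pi i^{k-2}$, but it must be done carefully because those constants appear verbatim in the statement.

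Next I would compute the Fourier expansion of $\mathcal{F}(z)$ at the cusp $\infty$ directly. Expanding the Eisenstein series by the standard unfolding/Poisson-summation argument produces two kinds of terms: a holomorphic part $\sum_{n\geq 0} c^+(n) q^n$ and a non-holomorphic part involving the incomplete gamma functions $\beta_{2-k}(n,y)$ from \eqref{def_beta1}–\eqref{def_beta2}. The coefficients that emerge are divisor-type sums twisted by $\psi$ and $\varrho$; matching them against the definition \eqref{def_sigma_psi_rho} of $\sigma^{\psi,\varrho}_{k-1}(n)$ is the heart of the calculation. Concretely, the inner sum over $0<d\mid\gcd(\ell_\varrho,c)$ with the Möbius factor $\mu(\ell_\varrho/d)\overline{\varrho^0}(\ell_\varrho/d)\varrho^0(c/d)$ is precisely what one gets when one passes from the imprimitive character $\varrho$ to its associated primitive character $\varrho^0$ inside the Dirichlet series — i.e.\ it records the local factors at primes $p\mid M$ that are missing from $\varrho^0$. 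So the computation splits as: (i) the $n>0$ coefficient is a Gauss-sum multiple of $n^{1-k}\sigma_{k-1}^{\overline\psi,\overline\varrho}(n/t)$, where the $n/t$ reflects the scaling by $t$ and the convention $f(x)=0$ for $x\notin\mathbb N$; (ii) the $n=0$ coefficient comes from the contribution of the identity coset together with the functional equation of the partial zeta/$L$-function, which produces $L(k-1,\overline\psi)$ and the Euler product $\prod_{p\mid M}(1-p^{-1})$ when $\varrho=\mathbf 1_M$, and vanishes otherwise because a nontrivial character sum kills the constant term.

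Once the Fourier expansion of $\mathcal F$ is in hand in the shape \eqref{Four-exp-harmonic}, the theorem follows immediately: by definition the holomorphic part $\sum_{n\geq 0}c^+_{2-k}(n,t,\psi,\varrho)q^n$ is a mock modular form of weight $2-k$, level $tLM$, character $\overline{\psi\varrho}$, with shadow $\xi_{2-k}(\mathcal F)=E_k^{\psi,\varrho,t}(z)$. The hypothesis $k>2$, or $k=2$ with $(\psi,\varrho)\neq(\mathbf 1_L,\mathbf 1_M)$, is exactly the condition guaranteeing that the relevant real-analytic Eisenstein series is holomorphic at the special value of $s$ (no pole, so no extra logarithmic or constant correction term is needed), which is why the case $k=2$, $\psi=\varrho=\mathbf 1$ — corresponding to $G_2$ — is excluded here and treated separately. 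I expect the main obstacle to be the constant-term computation in (ii): tracking the archimedean gamma factors, the functional equation of $L(s,\psi)$, and the non-archimedean correction $\prod_{p\mid M}(1-p^{-1})$ through the limit $s\to 0$ (or $s\to k-1$) without sign or normalization errors. Everything else is a careful but routine unfolding of a classical Eisenstein series.
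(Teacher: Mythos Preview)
Your proposal is correct and follows essentially the same route as the paper: the paper constructs the harmonic Maass form $\E^{\psi,\varrho,t}_{2-k}(z)=(k-1)^{-1}y^{k-1}E_{2-k}(tMz,k-1,\overline{\psi},\varrho)$ from the non-holomorphic Eisenstein series, verifies $\xi_{2-k}(\E^{\psi,\varrho,t}_{2-k})=E_k^{\psi,\varrho,t}$ in Theorem~\ref{teo-preimage}, and then reads off the holomorphic Fourier coefficients from the expansion \eqref{FEnhEis} (quoted from Miyake rather than re-derived by Poisson summation), with the constant term handled via Legendre's duplication formula for the archimedean gamma factors. Your identification of the hypothesis as precisely the condition avoiding a pole at the special value of $s$, and of the inner M\"obius sum as the imprimitive-to-primitive correction for $\varrho$, matches the paper's computation exactly.
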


\begin{theorem}\label{teo-fourier-exp-mock-2,I,I}
Assume $k=2$ and $(\psi,\varrho)= (\bfo_L,\bfo_M)$.
%, and let $t\geq 1$ be an integer. 
Put
$$c_{0}^+(0,t,\bfo_L,\bfo_M):=\frac{\pi\log(t)}{M}\prod_{p\mid L}(1-p^{-1}) \prod_{p\mid M}(1-p^{-1}),$$
and for $n>0$ put
$$c_{0}^+(n,t,\bfo_L,\bfo_M):=\frac{\pi}{M^2}n^{-1}\left(\sigma_1^{\bfo_L,\bfo_M}(n)-t\sigma_1^{\bfo_L,\bfo_M}\left(\frac{n}{t}\right)\right).$$
Then, the series \eqref{eq-Four-exp-mock-G} defines a mock modular form of weight $0$, level $tLM$ and trivial character with shadow $E_2^{\bfo_L,\bfo_M,t}(z)$.
\end{theorem}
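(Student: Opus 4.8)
The plan is to realize the weak harmonic Maass form $\widehat G_0^{\bfo_L,\bfo_M,t}(z)$ whose holomorphic part is the series \eqref{eq-Four-exp-mock-G} as a special value of a weight-$0$ non-holomorphic Eisenstein series $\mathcal{E}_0^{\bfo_L,\bfo_M,t}(z,s)$, exactly as in the general construction of Section \ref{sect-const-preimage}. The genuinely new point is that, for $k=2$ and trivial characters, the relevant evaluation point $s=1$ is the classical pole of the weight-$0$ real-analytic Eisenstein series, whose residue is a nonzero \emph{constant} (Kronecker). So I would first check that in the $(\bfo_L,\bfo_M,t)$-combination these constant residues cancel — this is precisely why the definition of $E_2^{\bfo_L,\bfo_M,t}$ must incorporate the level-raising parameter $t$, in accordance with $c_0^+(0,1,\bfo_L,\bfo_M)=0$ — so that $\mathcal{E}_0^{\bfo_L,\bfo_M,t}(z,s)$ is holomorphic at $s=1$ and one may set $\widehat G_0^{\bfo_L,\bfo_M,t}(z):=\mathcal{E}_0^{\bfo_L,\bfo_M,t}(z,1)$. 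Since the $\Delta_0$-eigenvalue $s(s-1)$ of $\mathcal{E}_0^{\bfo_L,\bfo_M,t}(z,s)$ vanishes at $s=1$ and the pole has been removed, this value is a well-defined element of $H_0(tLM,\bfo)$ with only polynomial growth at the cusps; its image under $\xi_0$ is holomorphic on $\HH$ and at the cusps, and specializing at $s=1$ the general relation between $\xi_{2-k}\mathcal{E}_{2-k}^{\psi,\varrho,t}(z,s)$ and $E_k^{\psi,\varrho,t}(z)$ from Section \ref{sect-const-preimage} identifies it, after fixing the overall constant, with $E_2^{\bfo_L,\bfo_M,t}(z)$.

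To extract the explicit Fourier coefficients I would use the resulting generalized first Kronecker limit formula. Unwinding $\mathcal{E}_0^{\bfo_L,\bfo_M,t}(z,s)$ into weight-$0$ Eisenstein series attached to the cusps of $\Gamma_0(tLM)$ and passing to $s=1$ expresses $\widehat G_0^{\bfo_L,\bfo_M,t}(z)$, up to an explicit nonzero constant, as $\log t$ plus a constant multiple of $\log|h(z)|$, where $h$ is a weight-$0$ modular unit on $\Gamma_0(tLM)$ built from the discriminant function: for $L=M=1$ one gets $\widehat G_0^{\bfo_1,\bfo_1,t}(z)=\pi\log t+\tfrac{\pi}{12}\log\!\bigl|\Delta(tz)/\Delta(z)\bigr|$, and for general $L,M$ a suitable product of $\Delta(dz)$ over divisors $d\mid tLM$, whose combinatorics produces the Euler factors $\prod_{p\mid L}(1-p^{-1})\prod_{p\mid M}(1-p^{-1})$. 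Since $h$ is holomorphic and nonvanishing on $\HH$, one has $\xi_0(\log|h|)=i\,h'/h$, a holomorphic weight-$2$ form that one matches with $E_2^{\bfo_L,\bfo_M,t}(z)$ by comparison with the definition in Section \ref{section-hol-Eis}. The Fourier expansion of $\widehat G_0^{\bfo_L,\bfo_M,t}$ at $\infty$ then comes from feeding the classical Fourier expansion of the weight-$0$ Eisenstein series (the terms $y^{s}$, $\phi(s)\,y^{1-s}$ with $\phi(s)$ a ratio of Dirichlet $L$-functions times elementary Euler factors, and the non-constant terms involving $\sqrt y\,K_{s-1/2}(2\pi|n|y)$ weighted by twisted divisor sums) into the combination and evaluating at $s=1$. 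The cancellation of the $1/(s-1)$ poles simultaneously kills the $\log y$ that $y^{1-s}=1-(s-1)\log y+\cdots$ would otherwise contribute to the constant term; what survives there is a $\log t$ coming from $\partial_s(t^{1-s})|_{s=1}=-\log t$ inside the scattering entries, which yields $c_0^+(0,t,\bfo_L,\bfo_M)$; and for $n>0$ the half-integral index collapses the Bessel function, $K_{1/2}(x)=\sqrt{\pi/(2x)}\,e^{-x}$, while the twisted divisor sums assemble into $\sigma_1^{\bfo_L,\bfo_M}(n)-t\,\sigma_1^{\bfo_L,\bfo_M}(n/t)$ with the factor $\tfrac{\pi}{M^2}n^{-1}$. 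The companion computation of the non-holomorphic coefficients $c(n)$ in \eqref{Four-exp-harmonic} reproduces the Fourier coefficients of $E_2^{\bfo_L,\bfo_M,t}$, confirming the shadow.

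The main obstacle is exactly this degeneration at $s=1$: one must carry the Laurent expansions far enough to verify that, after forming the $(\bfo_L,\bfo_M,t)$-combination, both the polar parts and the accompanying $\log y$-terms cancel — leaving a genuine element of $H_0(tLM,\bfo)$ — and then track the surviving $\log t$ together with the two Euler products through the constant term without sign or normalization slips. This is also what forces $k=2$ with trivial characters to be treated separately from Theorem \ref{teo-fourier-exp-mock-geq2}, where for non-trivial characters the weight-$0$ Eisenstein series is already regular at $s=1$. Everything else is a specialization of the $k\geq 3$ argument, with the Whittaker/Bessel functions becoming elementary.
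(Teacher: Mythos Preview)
Your proposal is correct and follows essentially the same route as the paper: form the combination $y^sE_0(Mz,s,\bfo_L,\bfo_M)-t\,y^sE_0(tMz,s,\bfo_L,\bfo_M)$ so that the simple poles at $s=1$ cancel, evaluate the known Fourier expansion there, and read off the holomorphic coefficients (the paper does exactly this, quoting the expansion \eqref{eq-Fou-exp-preimage-wt2-triv-char} already computed in the proof of Theorem~\ref{teo-preimage}). Your detour through the Kronecker limit formula and the identification with $\log|h|$ for a modular unit $h$ is a correct and pleasant additional interpretation, but the paper does not invoke it and it is not needed to obtain the stated coefficients.
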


\begin{theorem}\label{teo-four-exp-1}
Assume $k=1$. 
%Let $\psi,\varrho$ be Dirichlet characters mod $L$ and $M$ respectively with $\psi(-1)\varrho(-1)=-1$, and $t\geq 1$ be an integer. 
Put
$$c_{1}^+(0,t,\psi,\varrho):=\left\{
\begin{array}{ll}
2L'(1,\varrho), & \mbox{ if } \psi=\bfo_1,\\
2\pi i\big( \log(2tM)L(0,\overline{\psi})-L'(0,\overline{\psi}) \big)   \prod_{p\mid M} (1-p^{-1}), & \mbox{ if }\varrho=\bfo_M, \\
0, & \mbox{ otherwise},
\end{array}
\right.$$
and for $n>0$ put
\begin{eqnarray*}
c_{1}^+(n,t,\psi,\varrho)&:=&
-\frac{2\pi i }{M}W(\varrho^0) \bigg(\sigma_0^{\overline{\psi},\overline{\varrho}}\left(\frac{n}{t}\right)\left(\log\left(\frac{\pi}{M^2}\right)+\gamma-\log(n)\right)\\
& & +2 \sum_{0<c\mid \frac{n}{t}}\overline{\psi}\left(\frac{n}{tc}\right)\log(c)
\sum_{0<d\mid \gcd(\ell_{\varrho},c)}d\, \mu\left(\frac{\ell_{\varrho}}{d}\right)
\varrho^0\left(\frac{\ell_{\varrho}}{d}\right)\overline{\varrho^0}\left(\frac{c}{d}\right)\bigg),
\end{eqnarray*}
where the last sum is defined to be zero if $t\nmid n$. Then, the series \eqref{eq-Four-exp-mock-G} defines a mock modular form of weight $1$, level $tLM$ and character $\overline{\psi\varrho}$ with shadow $E_1^{\psi,\varrho,t}(z)$.
\end{theorem}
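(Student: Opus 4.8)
The plan is to realize the harmonic Maass form underlying the mock modular form $G_1^{\psi,\varrho,t}(z)$ as the $s$-derivative, at a special point, of the weight-one non-holomorphic Eisenstein series $\E_1^{\psi,\varrho,t}(z,s)$ attached to $(\psi,\varrho,t)$, following the ideas of Kudla, Rapoport and Yang \cite{KRY99}. Recall the key properties of this series: it transforms according to \eqref{eq-modularity} with weight $1$, level $tLM$ and character $\overline{\psi\varrho}$, it has at most polynomial growth at every cusp of $\Gamma_0(tLM)$, and it satisfies an eigenvalue equation $\Delta_1\,\E_1^{\psi,\varrho,t}(z,s)=\lambda(s)\,\E_1^{\psi,\varrho,t}(z,s)$ with, in the chosen normalization, $\lambda(s)=s^{2}$. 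The point special to weight one is that $s=0$ is a \emph{double} zero of $\lambda$ and that $\E_1^{\psi,\varrho,t}(z,0)$ is, up to a constant, the holomorphic Eisenstein series $E_1^{\psi,\varrho,t}(z)$, which is killed by $\xi_1$; to obtain a nontrivial $\xi_1$-preimage one is therefore forced to pass to
$$
F(z):=\left.\frac{\partial}{\partial s}\,\E_1^{\psi,\varrho,t}(z,s)\right|_{s=0},
$$
up to a normalizing constant fixed below.

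First I would check that $F\in H_1(tLM,\overline{\psi\varrho})$. The transformation law is inherited from $\E_1^{\psi,\varrho,t}(z,s)$, since the automorphy factor in \eqref{eq-modularity} does not involve $s$. Harmonicity is forced by the double zero: differentiating the eigenvalue equation and using \eqref{eq-Delta-xi} yields
$$
\Delta_1 F=\lambda'(0)\,\E_1^{\psi,\varrho,t}(z,0)+\lambda(0)\,F=0,
$$
as $\lambda(0)=\lambda'(0)=0$. The growth of $F$, and of $\xi_1(F)$, at the cusps follows from the corresponding estimates for $\E_1^{\psi,\varrho,t}(z,s)$, differentiation in $s$ costing at most one extra power of $\log y$, which remains admissible. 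To compute $\xi_1(F)$, note that by \eqref{eq-Delta-xi} and $\Delta_1F=0$ the form $\xi_1(F)$ is holomorphic, hence lies in $M_1(tLM,\psi\varrho)$; one identifies it either from the known action of $\xi_1$ on $\E_1^{\psi,\varrho,t}(z,s)$ --- which returns, up to a scalar, a weight-one Eisenstein series whose value at $s=0$ is $E_1^{\psi,\varrho,t}(z)$ --- or directly from the Fourier expansion below, and then rescales $F$ so that $\xi_1(F)=E_1^{\psi,\varrho,t}(z)$.

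The bulk of the argument is the Fourier expansion of $F$ at $\infty$, obtained by differentiating term by term in $s$, at $s=0$, the classical (Hecke-type) Fourier expansion of $\E_1^{\psi,\varrho,t}(z,s)$ --- a step to be justified by locally uniform convergence. That expansion has constant term of the shape $A(s)\,y^{s}+B(s)\,y^{-s}$, with $A$ and $B$ built from ratios of Dirichlet $L$-functions, $\Gamma$-factors and powers of $\pi$, $t$ and $M$, times Gauss sums, while for $n\neq 0$ the $n$-th coefficient is a product of such a factor with a twisted divisor sum of the type entering $\sigma^{\psi,\varrho}_{0}$ and an archimedean Whittaker factor. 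For $n>0$ the positive-frequency terms must collapse --- $F$ being harmonic --- to $\sum_{n>0}c_1^+(n,t,\psi,\varrho)q^{n}$: differentiating the divisor-sum exponent $c^{\,2s}$ produces the $2\log(c)$-terms, differentiating the overall power $n^{-s}$ produces $-\log(n)$, and Taylor-expanding the archimedean factors at $s=0$ produces the constants $\gamma$ and $\log(\pi/M^{2})$ (the former through $\Gamma'(1)=-\gamma$). For $n=0$, $\partial_s\bigl(A(s)y^{s}+B(s)y^{-s}\bigr)|_{s=0}=\bigl(A'(0)+B'(0)\bigr)+\bigl(A(0)-B(0)\bigr)\log(y)$; matching the $\log(y)$-term against the value $\beta_1(0,y)=-\log(y)$ from \eqref{def_beta2} in \eqref{Four-exp-harmonic} identifies the non-holomorphic constant coefficient, while $A'(0)+B'(0)$ is the asserted $c_1^+(0,t,\psi,\varrho)$. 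Its three cases mirror the analytic behavior of the $L$-factors in $A$ and $B$: when $\psi=\bfo_1$ a pole of $L(\cdot,\psi)$ meets the relevant factor and the derivative picks out $2L'(1,\varrho)$; when $\varrho=\bfo_M$ an analogous degeneration produces $2\pi i\bigl(\log(2tM)L(0,\overline{\psi})-L'(0,\overline{\psi})\bigr)$ together with the Euler correction $\prod_{p\mid M}(1-p^{-1})$ coming from $L(s,\bfo_M)=\zeta(s)\prod_{p\mid M}(1-p^{-s})$; in the remaining cases the constant term of $\E_1^{\psi,\varrho,t}(z,s)$ vanishes identically in $s$, hence so does its derivative. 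Finally, since a weight-one mock modular form is determined by its shadow only up to a holomorphic form in $M_1(tLM,\overline{\psi\varrho})$, the series \eqref{eq-Four-exp-mock-G} with the stated coefficients is a valid --- if only essentially unique --- mock modular form with shadow $E_1^{\psi,\varrho,t}(z)$.

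The step I expect to be the main obstacle is this last one: performing the $s$-derivative of the Fourier expansion cleanly enough to \emph{separate}, in \eqref{Four-exp-harmonic}, the holomorphic part $c_1^+(n)q^{n}$ from the non-holomorphic part $\overline{c(n)}\,\beta_1(n,y)\,q^{-n}$ --- delicate in weight one because $\beta_1$ itself carries a logarithm and one must verify that the positive-frequency Whittaker factors degenerate to pure exponentials --- and, in parallel, reconciling the three-way case distinction in $c_1^+(0,t,\psi,\varrho)$ with the precise locations of the poles and the special values of the Dirichlet $L$-functions occurring in $A(s)$ and $B(s)$.
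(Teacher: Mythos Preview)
Your proposal is correct and follows essentially the same route as the paper: define the harmonic preimage as $\partial_s\bigl(y^{s}E_{1}(tMz,s,\overline{\psi},\varrho)\bigr)\big|_{s=0}$, differentiate the Hecke-type Fourier expansion term by term at $s=0$, and read off the holomorphic coefficients exactly as you describe (the $2\log(c)$ from $\partial_s c^{2s}$, the $-\log(tMn)$ from $\partial_s(tMn)^{-s}$, the $\gamma$ from $\Gamma'(1)$, etc.). One small cosmetic difference: the paper establishes harmonicity \emph{a posteriori} via $\Delta_1=\xi_1\circ\xi_1$ once $\xi_1(F)=E_1^{\psi,\varrho,t}$ has been computed from the Fourier expansion, rather than through your eigenvalue argument with $\lambda(s)=s^{2}$ --- but both are valid and lead to the same place.
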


As applications of these results, we will construct mock modular forms whose shadows are Hecke's Eisenstein series associated to imaginary quadratic fields, giving an alternative proof of some results by Kudla, Rapoport and Yang (see \cite{KRY99}, Theorem 1), and Schofer (see \cite{SCH}, Theorem 4.1). We will also construct forms whose shadows are particular examples of theta series, such as 
$\Theta^2(z)$, $\Theta^4(z)$, $\Theta^6(z)$ and $\Theta^8(z)$. By a result of Rankin (see \cite{RAN}) these are the only even powers of $\Theta(z)$ that are linear combinations of Eisenstein series.

\subsection{Organization of the article} This paper is organized as follows: In Section \ref{section-hol-Eis} we give the definition of the Eisenstein series $E_k^{\psi,\varrho,t}(z)$ mentioned above. In Section \ref{section-non-hol-Eis} we recall the definition and basic properties of some non-holomorhic Eisenstein series which play a central role in the construction of the harmonic pre-image of $E_k^{\psi,\varrho,t}(z)$, which is given in Section \ref{sect-const-preimage}. Section \ref{sect-four-expansions} is devoted to the proofs of Theorems  \ref{teo-fourier-exp-mock-geq2}, \ref{teo-fourier-exp-mock-2,I,I} and \ref{teo-four-exp-1}. Finally, in Section \ref{sec-examples} we give some examples including the ones mentioned above.

\section{Holomorphic Eisenstein series}\label{section-hol-Eis}

In this section we review the construction of the holomorphic Eisenstein series 
$E_k^{\psi,\varrho,t}(z)$ mentioned in the introduction. Throughout this and the next sections,
we let $N$, $k$, $L$ and $M$ be integers with $N, k, L, M\geq 1$.
Moreover, we let $\psi$ and $\varrho$ denote Dirichlet characters mod $L$ and $M$, respectively, satisfying \eqref{eq-psi-rho-k-compatible}. We set
%$k\geq 1$, 
$$
C_k(\psi,\varrho)
:=\left\{
\begin{array}{ll}
L(k,\conj{\varrho}), 
%L(s,\conj{\varrho}), 
& \mbox{ if }\psi= \bfo_1,\\
0, & \mbox{ if }\psi\neq \bfo_1,\\
\end{array}
\right.
$$
and we define
$$D(\psi,\varrho):=\left\{
\begin{array}{ll}
-\pi i L(0,\psi) \prod_{p\mid M}(1-p^{-1}), & \mbox{ if }\varrho= \bfo_M,\\
0, & \mbox{ if }\varrho \neq \bfo_M.
\end{array}
\right.$$
With the above notation and with $\sigma^{\psi,\varrho}_{k-1}(n)$ given by \eqref{def_sigma_psi_rho}, we define the functions $E^{\psi,\varrho}_k(z)$ by their Fourier expansion
as follows.
When $k>2$, or $k=2$ and $(\psi,\varrho)\neq (\bfo_L,\bfo_M)$, we define
$$
E^{\psi,\varrho}_k(z):=C_k(\psi,\varrho)+\left(-\frac{2\pi i}{M}\right)^k \frac{W(\conj{\varrho^0})}{(k-1)!}\sum_{n=1}^{\infty}\sigma^{\psi,\varrho}_{k-1}(n)q^n.$$
When $k=2$ and $(\psi,\varrho)=(\bfo_L,\bfo_M)$, we define
$$E_2^{\bfo_L,\bfo_M}(z):=C_2(\bfo_L,\bfo_M)-\frac{\pi}{2 M y}\prod_{p\mid M}(1-p^{-1})\prod_{p\mid L}(1-p^{-1})-\frac{4\pi^2 }{M^2} \sum_{n=1}^{\infty}\sigma^{\bfo_L,\bfo_M}_{1}(n)q^n.$$
When $k=1$, we define
$$E^{\psi,\varrho}_1(z):=C_1(\psi,\varrho)+D(\psi,\varrho)-\frac{2\pi i }{M} W(\overline{\varrho^0})\sum_{n=1}^{\infty}\sigma^{\psi,\varrho}_{0}(n)q^n.$$
%where
%$$D(\psi,\varrho):=\left\{
%\begin{array}{ll}
%-\pi i L(0,\psi) \prod_{p\mid M}(1-p^{-1}), & \mbox{ if }\varrho= \bfo_M,\\
%0, & \mbox{ if }\varrho \neq \bfo_M.
%\end{array}
%\right.$$
Note that in this case $\psi(-1)\varrho(-1)=-1$, hence if $\psi=\bfo_1$, then $\varrho\neq \bfo_M$ and  $C_1(\psi,\varrho)$ is a well-defined complex number. 

The following result is well-known %\textcolor{blue}{add reference?} 
and it is also a consequence of Proposition \ref{prop-special-value} in Section \ref{section-non-hol-Eis}.

\begin{proposition}\label{prop-Eis-series-hol}
% and let $\psi,\varrho$ denote Dirichlet characters mod $L$ and $M$ respectively, satisfying $\psi(-1)\varrho(-1)=(-1)^k$. 
If $k\neq 2$, or $k=2$ and $(\psi,\varrho)\neq (\bfo_L,\bfo_M)$, then the series $E^{\psi,\varrho}_1(z)$ defines a holomorphic modular form in $M_k(LM,\psi\varrho)$. Moreover, the series $E_2^{\bfo_L,\bfo_M}(z)$ defines a non-holomorphic function transforming like a form in $M_1(LM,\bfo_{LM})$.
\end{proposition}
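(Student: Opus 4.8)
The plan is to verify directly that each of the three Fourier expansions defining $E^{\psi,\varrho}_k(z)$ satisfies the modular transformation law \eqref{eq-modularity} with the claimed weight, level $LM$, and character $\psi\varrho$, together with the correct growth at the cusps. The cleanest route, and the one signalled by the sentence immediately preceding the statement, is \emph{not} to prove modularity from the $q$-expansion by hand but to exhibit $E^{\psi,\varrho}_k(z)$ as a special value (or, when $k=2$ and $(\psi,\varrho)=(\bfo_L,\bfo_M)$, a limiting value in the sense of Hecke's trick) of a real-analytic Eisenstein series $E^{\psi,\varrho}_k(z,s)$ whose modular transformation behaviour in $z$ is manifest from its definition as a sum over cosets. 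So the first step is to introduce, in Section~\ref{section-non-hol-Eis}, the non-holomorphic Eisenstein series attached to the pair $(\psi,\varrho)$ and the integer $t=1$, record its transformation law under $\Gamma_0(LM)$ with character $\psi\varrho$, its analytic continuation in $s$, and its Fourier expansion at $\infty$; this is precisely what Proposition~\ref{prop-special-value} is set up to provide.

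Granting that, the proof of Proposition~\ref{prop-Eis-series-hol} reduces to three points. First, one specializes $s$ to the value at which $E^{\psi,\varrho}_k(z,s)$ becomes holomorphic in $z$ (for $k>1$, or $k=1$; for $k=2$ with $(\psi,\varrho)\ne(\bfo_L,\bfo_M)$) and checks that the resulting Fourier expansion coincides termwise with the series written down above — matching the constant term $C_k(\psi,\varrho)$ (resp. $C_k+D$ when $k=1$) and the coefficient $\left(-\tfrac{2\pi i}{M}\right)^k\frac{W(\overline{\varrho^0})}{(k-1)!}\sigma^{\psi,\varrho}_{k-1}(n)$ of $q^n$. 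Since transformation in $z$ is inherited from the series before specialization, this already gives $E^{\psi,\varrho}_k\in M_k(LM,\psi\varrho)$ once one also notes holomorphy and the (automatic, from the expansion) moderate growth at every cusp. Second, for $k=2$ and $(\psi,\varrho)=(\bfo_L,\bfo_M)$ one cannot specialize to a holomorphic point: the Eisenstein series $E_2(z,s)$ has the well-known feature that its value at the relevant point carries the extra term $-\tfrac{\pi}{2My}\prod_{p\mid M}(1-p^{-1})\prod_{p\mid L}(1-p^{-1})$, which is real-analytic but not holomorphic; one reads this off from the constant term of the Fourier expansion in Proposition~\ref{prop-special-value}, and the transformation law under $\Gamma_0(LM)$ (now with trivial character, and weight $2$) is again inherited, giving the asserted statement that $E_2^{\bfo_L,\bfo_M}$ transforms like a form in $M_1(LM,\bfo_{LM})$ — note the weight discrepancy in the paper's statement is presumably a typo for weight $2$. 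Third, one checks the claim that $C_1(\psi,\varrho)$ is a well-defined number when $k=1$ and $\psi=\bfo_1$: since $\psi(-1)\varrho(-1)=-1$ forces $\varrho(-1)=-1$, the character $\varrho$ is nontrivial and odd, so $L(1,\overline{\varrho})$ converges; this is the short remark already made in the text.

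The main obstacle is genuinely the $k=2$, $(\psi,\varrho)=(\bfo_L,\bfo_M)$ case, i.e. the non-absolute convergence of the defining Eisenstein series at the point of interest and the consequent need for Hecke's analytic-continuation argument to make sense of $E_2^{\bfo_L,\bfo_M}(z)$ at all. Everything hinges on having the Fourier expansion of $E^{\psi,\varrho}_k(z,s)$ valid in a neighbourhood of the special $s$-value, with the constant term's $y$-dependence computed carefully enough to capture the $1/y$ contribution; once Proposition~\ref{prop-special-value} supplies this, the rest is bookkeeping. A secondary, purely mechanical obstacle is the identification of the arithmetic factor: one must confirm that the Fourier coefficients coming out of the coset sum (which naturally involve a divisor sum twisted by $\psi$ and a Gauss-type sum in $\varrho$) reorganize exactly into the twisted divisor function $\sigma^{\psi,\varrho}_{k-1}(n)$ of \eqref{def_sigma_psi_rho}, including the Möbius-weighted inner sum over $d\mid\gcd(\ell_\varrho,c)$ that accounts for $\varrho$ being imprimitive; this is a finite computation relating $W(\varrho)$ for imprimitive $\varrho$ to $W(\varrho^0)$ and a Ramanujan-type sum, and I would carry it out once, cleanly, in Section~\ref{section-non-hol-Eis} rather than here.
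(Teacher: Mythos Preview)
Your approach is essentially the paper's own: the proposition is not proved directly but is deduced from Proposition~\ref{prop-special-value}, which identifies $E^{\psi,\varrho}_k(z)$ with the specialization $E_k(Mz,0,\psi,\overline{\varrho})$ of the non-holomorphic Eisenstein series, whose modular transformation law is manifest from the coset-sum definition in Section~\ref{section-non-hol-Eis}. Your observation about the weight typo in the statement (it should read $M_2$ rather than $M_1$) is also correct.
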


%Given integers $k,t\geq 1$ and a pair of Dirichlet characters $\psi,\varrho$ mod $L$ and $M$, respectively, satisfying $\psi(-1)\varrho(-1)=(-1)^k$
For an integer $t\geq 1$, we define the Eisenstein series
$$
E^{\psi,\varrho,t}_k(z):=\left\{\begin{array}{ll}
E_2^{\bfo_L,\bfo_M}(z)-tE_2^{\bfo_L,\bfo_M}(tz), & \mbox{ if }k=2 \mbox { and } (\psi,\varrho)=(\bfo_L,\bfo_M),\\
E^{\psi,\varrho}_k(tz), & \mbox{ otherwise}.
\end{array}
\right.$$
It follows from Proposition \ref{prop-Eis-series-hol} that $E^{\psi,\varrho,t}_k(z)$ is a holomorphic modular form in $M_k(tLM,\psi\varrho)$. The following result is also well-known.

\begin{proposition}\label{prop-Eis-orth-compl}
The Eisenstein series $E^{\psi,\varrho,t}_k(z)$ belongs to the subspace $N_k(tLM,\psi \varrho)$. Moreover, given % integers $N,k \geq 1$ and 
a Dirichlet character $\chi$ mod $N$ with $\chi(-1)=(-1)^k$, the collection of Eisenstein series $E^{\psi,\varrho,t}_k(z)$, where $t\geq 1$ is an integer, and $\psi$, $\varrho$ are Dirichlet characters mod $L$ and $M$, respectively, satisfying $tLM\mid N$ and $\psi \varrho=\chi$, generates $N_k(N,\chi)$.
\end{proposition}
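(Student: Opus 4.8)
The plan is to prove Proposition~\ref{prop-Eis-orth-compl} in two stages: first showing that each individual series $E^{\psi,\varrho,t}_k(z)$ lies in $N_k(tLM,\psi\varrho)$, and then that, for a fixed character $\chi$ mod $N$, these series span all of $N_k(N,\chi)$. For the first stage, recall that the orthogonal complement of $S_k(N,\chi)$ inside $M_k(N,\chi)$ with respect to the Petersson inner product is exactly the Eisenstein subspace, i.e.\ the span of modular forms supported at the cusps. The cleanest way to see that $E^{\psi,\varrho,t}_k(z)\in N_k(tLM,\psi\varrho)$ is to exhibit it as a linear combination of the classical Eisenstein series $G_k^{\psi,\varrho}$ attached to pairs of primitive characters, whose orthogonality to cusp forms is a standard fact (cf.\ Diamond--Shurman, or Miyake's \emph{Modular Forms}); equivalently, one computes that the Petersson inner product against an arbitrary Hecke newform of level dividing $N$ vanishes, since $E^{\psi,\varrho,t}_k$ is an eigenform for the good Hecke operators with ``Eisenstein'' eigenvalues $\psi(p)+\varrho(p)p^{k-1}$ rather than cuspidal ones, and distinct systems of Hecke eigenvalues are orthogonal. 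In the exceptional weight-two case $k=2$, $(\psi,\varrho)=(\bfo_L,\bfo_M)$, the individual series $E_2^{\bfo_L,\bfo_M}(z)$ is only \emph{quasi}-modular (as recorded in Proposition~\ref{prop-Eis-series-hol}), so here I would use instead that the combination $E_2^{\bfo_L,\bfo_M}(z)-tE_2^{\bfo_L,\bfo_M}(tz)$ kills the non-holomorphic $1/y$-term and lands genuinely in $M_2(tLM,\bfo_{tLM})$, after which the same Hecke-eigenform argument (or the cusp-supported characterization) applies.

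For the second stage, the key input is the classical structure theorem for the Eisenstein subspace: for a Dirichlet character $\chi$ mod $N$ with $\chi(-1)=(-1)^k$, the space $\mathcal{E}_k(N,\chi):=N_k(N,\chi)$ has an explicit basis indexed precisely by triples $(\psi,\varrho,t)$ with $\psi$ a primitive character mod $L$, $\varrho$ a primitive character mod $M$, $\psi\varrho=\chi$ (as characters mod $N$, i.e.\ after inducing), and $t\geq 1$ with $tLM\mid N$ --- with the single modification in weight $2$ and trivial character that one omits the pair $(\bfo,\bfo)$ with $t=1$ and replaces the would-be $E_2$ by the differences $E_2(z)-tE_2(tz)$, which is exactly what the definition of $E^{\bfo_L,\bfo_M,t}_k$ encodes. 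This is, e.g., Theorem~4.5.2 and the surrounding discussion in Diamond--Shurman, or Theorem~7.1.3/7.2.12 in Miyake. I would first reduce to the case where $\psi$ and $\varrho$ are primitive (the series indexed by non-primitive characters are linear combinations of the primitive ones at various shifts $t$, by a standard oldform/Mobius computation using the relation between $\sigma^{\psi,\varrho}_{k-1}$ for $\psi,\varrho$ and for their primitivizations $\psi^0,\varrho^0$ --- this is where the factor $\sum_{d\mid\gcd(\ell_\varrho,c)}d\,\mu(\ell_\varrho/d)\dots$ in \eqref{def_sigma_psi_rho} comes from), so the span of my $E^{\psi,\varrho,t}_k$ is the same as the span of the classical basis. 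Counting dimensions then finishes it: the number of admissible triples matches $\dim N_k(N,\chi)$, so a spanning set of the right size is a basis.

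The main obstacle I anticipate is bookkeeping rather than conceptual: carefully matching the normalization of $E^{\psi,\varrho}_k$ given here (via the Fourier coefficient formula with the divisor sum $\sigma^{\psi,\varrho}_{k-1}(n)$ built to handle non-primitive $\varrho$) with whatever normalization appears in the reference one cites for the structure theorem, and handling the low-weight degeneracies. Specifically: (i) in weight $k=1$ one must use that $\psi(-1)\varrho(-1)=-1$ forces $\{\psi,\varrho\}\neq\{\bfo,\bfo\}$ and that the relation $E_1^{\psi,\varrho}=E_1^{\varrho,\psi}$ (up to the explicit constant, reflecting the symmetry of weight-one Eisenstein series) means one should only count \emph{unordered} pairs, so the dimension count must be adjusted accordingly; (ii) in weight $k=2$, trivial character, the constant function is not modular, which is precisely why the $t$-difference construction is needed, and one must check that $\{E_2^{\bfo,\bfo,t}: t\mid N,\ t>1\}$ together with the $E_2^{\psi,\varrho,t}$ for $(\psi,\varrho)\neq(\bfo,\bfo)$ has dimension one less than the naive count, matching $\dim\mathcal{E}_2(N,\bfo_N)$. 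Once these edge cases are isolated, each is a short finite check, and the bulk of the proposition follows by quoting the two standard facts (orthogonality of the classical Eisenstein series, and the explicit basis of the Eisenstein subspace) and verifying that the reindexing by $(\psi,\varrho,t)$ used here is the same one.
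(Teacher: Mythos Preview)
Your approach is correct and, in fact, considerably more detailed than what the paper provides: the paper does not prove this proposition at all. It simply states the result as well-known and refers the reader to Diamond--Shurman (Theorems~4.5.2, 4.6.2 and 4.8.1) for an exposition, noting only that the normalization there differs slightly. Your sketch is essentially an outline of what those theorems in Diamond--Shurman establish (orthogonality to cusp forms, explicit basis of the Eisenstein subspace indexed by triples $(\psi,\varrho,t)$, and the weight-one and weight-two edge cases), so there is no substantive divergence in method---you are filling in exactly the content the paper delegates to the reference. The bookkeeping concerns you flag (matching normalizations, the unordered-pair subtlety in weight one, the $t$-difference in weight two with trivial character) are real but routine, and the paper itself acknowledges the first of these in its closing remark and the latter two in the remark immediately following the proposition.
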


\begin{remark}
We remark that in the case $k\geq 2$ one obtains a basis of $N_k(N,\chi)$ by restricting to Eisenstein series $E^{\psi,\varrho,t}_k(z)$ with $\psi$ and $\varrho$ primitive, and with $t\geq 2$ if $k=2$ and $(\psi,\varrho)=(\bfo_L,\bfo_M)$. In the case $k=1$ one has to consider unordered pairs of primitive characters $\{\psi,\varrho\}$, since $E^{\psi,\varrho,t}_1(z)$ and $E^{\varrho,\psi,t}_1(z)$ satisfy the identity
$$\frac{M}{W\left(\overline{\varrho}\right)}E^{\psi,\varrho,t}_1(z)=\frac{L}{W\left(\overline{\psi}\right)}E^{\varrho,\psi,t}_1(z).$$
This identity follows easily from comparison of Fourier expansions, together with the functional equation of Dirichlet $L$-series for primitive characters. 
\end{remark}

We refer the reader to \cite{DiSh} (see Theorems 4.5.2, 4.6.2 and 4.8.1) for a nice exposition on the results described in this section. Note that the Eisenstein series are normalized in a slightly different way in \begin{it}loc.~cit\end{it}.

\section{Non-holomorphic Eisenstein series}\label{section-non-hol-Eis}

In this section we consider non-holomorphic Eisenstein series that evaluated at special points 
equal $E^{\psi,\varrho}_k(z)$. These non-holomorphic forms were studied by Hecke and are considered nowadays as part of the classical theory of automorphic forms. 
We let $k,L,M,\psi,\varrho$ be defined as in Section \ref{section-hol-Eis}.
%As in the previous section, we let  $k$, $L$ and $M$ be integers with $ k, L, M\geq 1$, and $\psi$, $\varrho$ denote Dirichlet characters mod $L$ and $M$, respectively, satisfying \eqref{eq-psi-rho-k-compatible}.
%
%Let $k,L,M\in \Z$ with $k,L,M\geq 1$, and $\psi,\varrho$ characters mod $L$ and $M$, respectively, satisfying $\psi(-1)\varrho(-1)=(-1)^k$. 
For $s\in \C$ with $k+2\Re(s)>2$, we define the Eisenstein series
\begin{equation}\label{def_Ekzspsirho}
E_k(z,s,\psi,\varrho):=\frac{1}{2}\sum_{\substack{m,n\in \Z\\ (m,n)\neq (0,0)}}\psi(m)\varrho(n)(mz+n)^{-k}|mz+n|^{-2s}.
\end{equation}
A modern treatment of these Eisenstein series can be found in \cite{MIY}, \S 7.2 (more precisely, the function studied in \begin{it}loc.~cit.\end{it}~equals $2E_k(z,s,\psi,\varrho)$ in our notation). This Eisenstein series converges uniformly and absolutely for $s\in \C$ with $2\Re(s)+k\geq 2+\varepsilon$, for any $\varepsilon>0$. 
Letting
$$
\Gamma_0(L,M):=\left\{\pm \begin{pmatrix}a&b\\ c&d\end{pmatrix}\in \mathrm{PSL}_2(\Z): c=0\, (\mbox{mod }L),b=0\, (\mbox{mod }M) \right\},
$$
they satisfy  the transformation formula
$$E_k(\gamma z,s,\psi,\varrho)=\psi(d)\overline{\varrho(d)}(cz+d)^k|cz+d|^{2s}E_k(z,s,\psi,\varrho)$$
for any
$\gamma=\pm \bigl(\begin{smallmatrix}a&b\\ c&d\end{smallmatrix}\bigr)\in \Gamma_0(L,M)$.
In particular, the function
\begin{equation}\label{def-non-hol-Eis-series}
y^{s}E_k(z,s,\psi,\varrho)=\frac{1}{2}L(\psi\varrho,2s+k)\sum_{\substack{c,d\in \Z\\ \gcd(c,d)=1}}\psi(c)\varrho(d)(cz+d)^{-k}\Im\left(\begin{pmatrix}\ast &\ast \\ c&d\end{pmatrix} z\right)^s  
\end{equation}
(here $\left(\begin{smallmatrix}\ast &\ast \\ c&d\end{smallmatrix}\right)$ denotes any matrix in $\mathrm{SL}_2(\Z)$ with bottom row $(c\ d)$) transforms like a modular form of weight $k$ and character $\psi \overline{\varrho}$ for $\Gamma_0(L,M)$, and
$$y^{s}E_k(Mz,s,\psi,\varrho)$$
transforms like a modular form of weight $k$, level $LM$ and character $\psi \overline{\varrho}$. 

For our purpose, the Fourier expansion of $y^{s}E_k(Mz,s,\psi,\varrho)$ at the cusp $\infty$ will be very useful. To state it, we introduce the following notation. We set
\begin{align*}
A_k(s;\varrho)%A(s;k,\varrho)
&:=\frac{2^{k}\pi^{s+k}i^{-k}}{M^{s+k}\Gamma(s+k)}W(\varrho^0),\\
B_k(s;\varrho)%B(s;k,\varrho)
&:=\frac{2^{-k}\pi^si^{-k}\varrho(-1)}{M^{s}\Gamma(s)}W(\varrho^0),\\
C_k(s;\psi,\varrho)%C(s;k,\psi,\varrho)
&:=\left\{
\begin{array}{ll}
L(2s+k,\varrho), & \mbox{ if }\psi= \bfo_1,\\
0, & \mbox{ if }\psi\neq \bfo_1,
\end{array}
\right.
\end{align*}
where $\Gamma(s)$ denotes Euler's $\Gamma$-function, and 
\begin{align*}
D_k(s;\psi,\varrho)&:=\left\{
\begin{array}{ll}
\sqrt{\pi}i^{-k}\frac{\Gamma\left(\frac{2s+k-1}{2}\right)\Gamma\left(\frac{2s+k}{2}\right)}{\Gamma(s)\Gamma(s+k)}L(2s+k-1,\psi)\prod_{p\mid M}(1-p^{-1}), & \mbox{ if }\varrho= \bfo_M,\\
0, & \mbox{ if }\varrho \neq \bfo_M.
\end{array}
\right.
\end{align*}
Furthermore, we define
$$a_k(s;n,\psi,\varrho):=\sum_{0<c\mid n}\psi\left(\frac{n}{c}\right)c^{2s+k-1}\sum_{0<d\mid \gcd(\ell_{\varrho},c)}d\, \mu\left(\frac{\ell_{\varrho}}{d}\right)
\varrho^0\left(\frac{\ell_{\varrho}}{d}\right)\overline{\varrho^0}\left(\frac{c}{d}\right),$$
where $\mu(n)$ denotes the M\"obius function. For $y>0$ and $\alpha, \beta\in \C$ with $\Re(\beta)>0$, we also define
\begin{align}\label{def_omega}
\omega(y;\alpha,\beta):=\frac{y^{\beta}}{\Gamma(\beta)}\int_0^{\infty}e^{-yu}(u+1)^{\alpha-1}u^{\beta-1}du.
\end{align}
This function is basically equal to a confluent hypergeometric function and can be extended 
to an analytic function on $\{(z;\alpha,\beta)\in \C^3: \Re(z)>0\}$, satisfying the identities
(see \cite{MIY}, Theorem 7.2.4, p.~279, and Lemma 7.2.6, p.~281)
\begin{align}
\omega(z;1-\beta,1-\alpha)&=\omega(z;\alpha,\beta),\label{eq-omega-funct-equation}\\
\omega(z;\alpha,0)&=1.\label{eq-omega-zero}
\end{align}

With the above notation, we have the following Fourier expansion (see, e.g.,~\cite{MIY}, Theorem 7.2.9, p.~284)
\begin{eqnarray}\label{FEnhEis}
 & & y^{s}E_k(Mz,s,\psi,\varrho)\\
 &=& y^sC_k(s;\psi,\varrho)+y^{-s-k+1}M^{-2s-k+1}D_k(s;\psi,\varrho) \nonumber \\
& & +A_k(s;\varrho)M^{-s}\sum_{n=1}^{\infty}a_k(s;n,\psi,\varrho)n^{-s}e^{2\pi i nz}\omega(4\pi ny;k+s,s) \nonumber \\
& & +y^{-k}B_k(s;\varrho)M^{-k-s}\sum_{n=1}^{\infty}a_k(s;n,\psi,\varrho)n^{-s-k}e^{-2\pi i n \conj{z}}\omega(4\pi ny ;s,k+s). \nonumber
\end{eqnarray}
This Fourier expansion can be used to meromorphically continue the function $y^{s}E_k(Mz,s,\psi,\varrho)$ to all $s\in \C$. In particular, the following assertions hold.

\begin{proposition}\label{prop-special-value}
%For any $k\geq 1$ and any pair of Dirichlet characters $\psi,\varrho$ mod $L$ and $M$ respectively, satisfying $\psi(-1)\varrho(-1)=(-1)^k$, 
The function $y^sE_k(Mz,s,\psi,\varrho)$ is analytic at $s=0$ and we have
\begin{equation}\label{eq.special-value}
E_k(Mz,0,\psi,\overline{\varrho})=E_k^{\psi,\varrho}(z).
\end{equation}
\end{proposition}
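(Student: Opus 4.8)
The plan is to read off everything from the Fourier expansion \eqref{FEnhEis}, evaluated at $s=0$. First I would check analyticity at $s=0$: the only terms that could cause trouble are those involving $\Gamma(s)$ in a denominator, namely $B_k(s;\varrho)$ and $D_k(s;\psi,\varrho)$ (through the factor $1/(\Gamma(s)\Gamma(s+k))$), and the factor $\omega(4\pi ny;s,k+s)$ multiplying the $B$-term. Using $1/\Gamma(s) = s/\Gamma(s+1) \to 0$ as $s\to 0$, the coefficient $B_k(s;\varrho)$ vanishes at $s=0$; and by \eqref{eq-omega-zero} one has $\omega(z;\alpha,0)=1$, so after applying the functional equation \eqref{eq-omega-funct-equation} the factor $\omega(4\pi ny;s,k+s)=\omega(4\pi ny; 1-k-s, 1-s)$ stays bounded as $s\to 0$. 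Hence the entire fourth (anti-holomorphic) sum in \eqref{FEnhEis} tends to $0$. Similarly $D_k(s;\psi,\varrho)$ carries a factor $1/\Gamma(s)$ that kills the second term (the $y^{-s-k+1}$ term) unless that zero is cancelled — and it is cancelled only when $k=1$, where $\Gamma((2s+k-1)/2)=\Gamma(s)$ supplies a compensating pole; but for $k=1$ the resulting limit is precisely the constant $D(\psi,\varrho)$, as a direct computation of $\lim_{s\to 0}\sqrt\pi\,\Gamma(s)\Gamma(s+1/2)/(\Gamma(s)\Gamma(s+1))\cdot L(2s,\psi)$ shows (using $\Gamma(1/2)=\sqrt\pi$, $\Gamma(1)=1$). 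For $k\geq 2$ the second term simply vanishes at $s=0$. This establishes analyticity and isolates the surviving pieces.

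Next I would evaluate the surviving terms at $s=0$. The first term gives $y^0 C_k(0;\psi,\varrho) = C_k(0;\psi,\varrho)$, which equals $L(k,\varrho)$ if $\psi=\bfo_1$ and $0$ otherwise; comparing with the definition of $C_k(\psi,\varrho)$ in Section \ref{section-hol-Eis}, and noting that in \eqref{eq.special-value} the character $\varrho$ has been replaced by $\overline\varrho$, this matches $C_k(\psi,\varrho)$ in the statement (here one uses $L(k,\overline{\overline\varrho})=L(k,\varrho)$, consistently with the swap). For the third term, at $s=0$ the factor $\omega(4\pi ny;k,0)=1$ by \eqref{eq-omega-zero}, so the exponential sum collapses to a genuine $q$-series: it becomes
\begin{equation*}
A_k(0;\varrho)M^{0}\sum_{n=1}^{\infty}a_k(0;n,\psi,\varrho)\,q^n
= \frac{2^k\pi^k i^{-k}}{M^k\Gamma(k)}W(\varrho^0)\sum_{n=1}^{\infty}a_k(0;n,\psi,\varrho)\,q^n.
\end{equation*}
Now $a_k(0;n,\psi,\varrho)=\sigma_{k-1}^{\psi,\varrho}(n)$ by comparing the defining sums (the exponent $2s+k-1$ becomes $k-1$), $\Gamma(k)=(k-1)!$, and $2^k\pi^k i^{-k}/M^k = (2\pi/M)^k i^{-k} = (-2\pi i/M)^k$ since $i^{-k}=(-i)^k$. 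After the substitution $\varrho\mapsto\overline\varrho$ required by \eqref{eq.special-value}, $W(\varrho^0)$ becomes $W(\overline{\varrho^0})$ and $\sigma_{k-1}^{\psi,\varrho}$ becomes $\sigma_{k-1}^{\psi,\varrho}$ in the notation of the statement (one must track carefully that the conjugation in \eqref{def_sigma_psi_rho} is already built in). This reproduces exactly the $n\geq 1$ part of $E_k^{\psi,\varrho}(z)$ as defined in Section \ref{section-hol-Eis}, for all three cases $k>2$, $k=2$, and $k=1$.

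Finally, for $k=2$ with $(\psi,\varrho)=(\bfo_L,\bfo_M)$ I would handle the extra non-holomorphic term separately: here $D_2(s;\bfo_L,\bfo_M)$ has the shape $\sqrt\pi\,\Gamma(s+1/2)\Gamma(s+1)/(\Gamma(s)\Gamma(s+2))\cdot L(2s+1,\bfo_L)\prod_{p\mid M}(1-p^{-1})$, and $L(2s+1,\bfo_L)$ has a simple pole at $s=0$ with residue $\tfrac12\prod_{p\mid L}(1-p^{-1})$ (from the pole of $\zeta$), so the product $y^{-s-1}M^{-2s-1}D_2(s;\bfo_L,\bfo_M)$ does \emph{not} vanish at $s=0$ but contributes the $-\tfrac{\pi}{2My}\prod_{p\mid M}(1-p^{-1})\prod_{p\mid L}(1-p^{-1})$ term, with the sign and constant coming out after evaluating $\Gamma(1/2)=\sqrt\pi$, $\Gamma(1)=1$, $\Gamma(2)=1$ and combining. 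I expect the main obstacle to be purely bookkeeping: keeping the normalizing constants, the powers of $i$, the Gauss sums, and especially the $\varrho\leftrightarrow\overline\varrho$ swap in \eqref{eq.special-value} perfectly consistent with the definitions in Section \ref{section-hol-Eis}, together with the delicate $s\to 0$ limits of ratios of $\Gamma$-factors in the $k=1$ and the $k=2$ trivial-character cases, where zeros and poles cancel rather than one factor simply vanishing. There is no conceptual difficulty once \eqref{FEnhEis}, \eqref{eq-omega-funct-equation} and \eqref{eq-omega-zero} are granted; the meromorphic continuation claim is immediate since the right-hand side of \eqref{FEnhEis} is manifestly meromorphic in $s$.
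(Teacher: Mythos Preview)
Your approach is essentially the same as the paper's: evaluate the Fourier expansion \eqref{FEnhEis} term by term at $s=0$ and match each piece with the definition of $E_k^{\psi,\varrho}(z)$, treating the $k=1$ and $k=2$ trivial-character cases separately where a zero of $1/\Gamma(s)$ cancels a pole. The paper simply cites Miyake for the analyticity statement rather than sketching it as you do, and records the five identities $C_k(0;\psi,\overline\varrho)=C_k(\psi,\varrho)$, $D_k(0;\psi,\overline\varrho)=\ldots$, $A_k(0;\overline\varrho)=\ldots$, $B_k(0;\overline\varrho)=0$, $a_k(0;n,\psi,\overline\varrho)=\sigma_{k-1}^{\psi,\varrho}(n)$ without further comment; note in particular that your line ``$a_k(0;n,\psi,\varrho)=\sigma_{k-1}^{\psi,\varrho}(n)$'' is off by a conjugation on $\varrho$ (compare the placement of the bars in the two defining sums), though your subsequent remark about the built-in conjugation shows you are aware of this.
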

\begin{proof}
For the first assertion, we refer the reader to \cite{MIY}, Corollary 7.2.10, p.~285. The proof of equality \eqref{eq.special-value} consists in a simple comparison of Fourier expansions. 
Indeed, one easily proves the following identities
\begin{align*}
C_k(0;\psi,\overline{\varrho})&= C_k(\psi,\varrho),\\
D_k(0;\psi,\overline{\varrho})&=
\left\{\begin{array}{ll}
-\frac{\pi}{2} \prod_{p\mid L} (1-p^{-1})\prod_{p \mid M}(1-p^{-1}), & \mbox{ if }k=2, (\psi,\varrho)=(\bfo_L,\bfo_M),\\[0.2cm]
D(\psi,\varrho), & \mbox{ if }k=1,\\
0, & \mbox{ otherwise},
\end{array}\right.
 \\
A_k(0;\overline{\varrho}) &= \left(-\frac{2\pi i}{M}\right)^k \frac{W(\overline{\varrho^0})}{(k-1)!},\\
B_k(0;\overline{\varrho}) &= 0,\\
a_k(0;n,\psi,\overline{\varrho}) &= \sigma_{k-1}^{\psi,\varrho}(n).
\end{align*}
This together with identity \eqref{eq-omega-zero} proves that the Fourier expansion of $E_k(Mz,0,\psi,\overline{\varrho})$ given by \eqref{FEnhEis} and the Fourier expansion defining $E_k^{\psi,\varrho}(z)$ agree, and henceforth these two functions are equal. This completes the proof.
\end{proof}

\section{Construction of pre-images}\label{sect-const-preimage} 

Let $k,L,M,\psi,\varrho$ be defined as in Section \ref{section-hol-Eis} and let $t\geq 1$ be an integer. 
In this section we construct a harmonic Maass form $\E_{2-k}^{\psi,\varrho,t}\in H_{2-k}(tLM,\overline{\psi\varrho})$, whose image under the differential operator $\xi_{2-k}$ equals $E_k^{\psi,\varrho,t}(z)$.  

When $k> 2$, the point $s=k-1$ is in the domain of the definition \eqref{def_Ekzspsirho} of $E_{2-k}(tMz,s,\overline{\psi},\varrho)$ and  we can define
$$\E^{\psi,\varrho,t}_{2-k}(z):=(k-1)^{-1}y^{k-1}E_{2-k}(tMz,k-1,\overline{\psi},\varrho).$$
When $k=2$ and $(\psi,\varrho)\neq (\bfo_L,\bfo_M)$, the meromorphic continuation of the function $E_{2-k}(tMz,s,\overline{\psi},\varrho)$ is analytic at $s=1$ (see \cite{MIY}, Corollary 7.2.11, p.~286) and we can define
$$\E^{\psi,\varrho,t}_{0}(z):=yE_{0}(tMz,1,\overline{\psi},\varrho).$$
When $k=2$ and $(\psi,\varrho)= (\bfo_L,\bfo_M)$ the function $y^sE_{0}(tMz,s,\bfo_L,\bfo_M)$ has a simple pole at $s=1$ with residue
$$\frac{\pi}{2tM}\prod_{p\mid L }(1-p^{-1})\prod_{p\mid M}(1-p^{-1})$$
(\begin{it}loc.~cit.\end{it}, Corollary 7.2.10 part (3) in p.~285), hence  we can define
$$\E^{\bfo_L,\bfo_M,t}_{0}(z):=\lim_{s\to 1}\left(y^sE_{0}(Mz,s,\bfo_L,\bfo_M)-ty^sE_{0}(tMz,s,\bfo_L,\bfo_M)\right).$$
When $k=1$, the meromorphic continuation of $E_1(Mz,s,\overline{\psi},\varrho)$ is analytic at $s=0$ (\begin{it}loc.~cit.\end{it}, Corollary 7.2.11 in p.~286) and we can define
$$\E^{\psi,\varrho,t}_{1}(z):=\frac{\partial}{\partial s}\left(y^{s}E_{1}(tMz,s,\overline{\psi},\varrho)\right)\big|_{s=0}.$$

With the above definitions, we now prove the following result.

\begin{theorem}\label{teo-preimage}
We have $\E^{\psi,\varrho,t}_{2-k}(z)\in H_{2-k}(LM,\overline{\psi\varrho})$ and $\xi_{2-k}(\E^{\psi,\varrho,t}_{2-k})=E^{\psi,\varrho,t}_{k}$.
\end{theorem}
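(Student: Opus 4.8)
The plan is to verify the two assertions of Theorem~\ref{teo-preimage} in each of the four cases $k>2$, $k=2$ with $(\psi,\varrho)\neq(\bfo_L,\bfo_M)$, $k=2$ with $(\psi,\varrho)=(\bfo_L,\bfo_M)$, and $k=1$, by exploiting the uniform description of $\E^{\psi,\varrho,t}_{2-k}(z)$ as a (possibly differentiated or limiting) value of the non-holomorphic Eisenstein series $y^sE_{2-k}(tMz,s,\overline{\psi},\varrho)$ from Section~\ref{section-non-hol-Eis}. First I would establish the modular transformation behaviour. The function $y^sE_{2-k}(Mz,s,\overline{\psi},\varrho)$ transforms like a weight $2-k$ form of level $LM$ and character $\overline{\psi}\overline{\overline{\varrho}}=\overline{\psi\varrho}$ by the formulas recalled after \eqref{def-non-hol-Eis-series} (applied with $k$ replaced by $2-k$ and $(\psi,\varrho)$ by $(\overline{\psi},\varrho)$); replacing $z$ by $tz$ multiplies the level by $t$, and taking a limit, an $s$-derivative, or a difference of two such functions preserves the transformation law since these operations commute with the slash action (the automorphy factor $(cz+d)^{2-k}$ carries no $s$-dependence). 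Hence in all four cases $\E^{\psi,\varrho,t}_{2-k}$ satisfies \eqref{eq-modularity} with weight $2-k$, level $tLM$, character $\overline{\psi\varrho}$; the level printed in the statement should read $tLM$.

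Next I would check harmonicity and growth. For $k>2$ and for $k=2$ with $(\psi,\varrho)\neq(\bfo_L,\bfo_M)$ the function $\E^{\psi,\varrho,t}_{2-k}$ is, up to the scalar $(k-1)^{-1}$ and the factor $y^{k-1}$, the value at $s=k-1$ (resp.\ $s=1$) of $E_{2-k}(tMz,s,\cdot,\cdot)$, and it is classical (and extractable from the Fourier expansion \eqref{FEnhEis}) that $y^sE_{2-k}(tMz,s,\overline{\psi},\varrho)$ is annihilated by the weight $2-k$ Laplacian $\Delta_{2-k}$ up to the eigenvalue $(s+k-1)(s)$-type factor which vanishes precisely at these points; more directly, the known fact that $\xi_{2-k}$ sends $y^sE_{2-k}(tMz,s,\cdot)$ to a multiple of $y^{-s}E_{k}(tMz,\cdot)$-type object, combined with \eqref{eq-Delta-xi}, gives $\Delta_{2-k}\E^{\psi,\varrho,t}_{2-k}=0$. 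For the limit and derivative cases ($k=2$ trivial pair, and $k=1$) one differentiates/takes limits of the eigenfunction equation; since the eigenvalue is a polynomial in $s$ vanishing at the relevant point, a first $s$-derivative still lands in the kernel of $\Delta_{2-k}$ (using \eqref{eq-Delta-xi} and that $\xi_{2-k}$ of the derivative is the $s$-derivative of a function that is itself killed by $\xi_k$ at that point, or is holomorphic there). Polynomial growth at every cusp follows from the explicit Fourier expansion \eqref{FEnhEis}: the terms involving $\omega(4\pi ny;\cdot,\cdot)q^{\pm n}$ decay, and only the two ``constant'' terms $y^sC(\cdots)$ and $y^{-s-k+1}M^{\cdots}D(\cdots)$ contribute growth, which is a bounded power of $y$; the same holds at the other cusps by applying the transformation law. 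This places $\E^{\psi,\varrho,t}_{2-k}$ in $H_{2-k}(tLM,\overline{\psi\varrho})$.

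Finally I would compute $\xi_{2-k}\bigl(\E^{\psi,\varrho,t}_{2-k}\bigr)$ and identify it with $E^{\psi,\varrho,t}_k$. The clean way is the identity $\xi_{2-k}\bigl(y^sE_{2-k}(tMz,s,\overline{\psi},\varrho)\bigr)=c(s)\,y^{s'}E_{k}(tM\overline{z}\text{-version},\cdots)$; more usefully, one knows that $\xi_{2-k}$ applied to the relevant special value of the non-holomorphic Eisenstein series produces, up to an explicit nonzero constant, the value $E_k(tMz,0,\psi,\overline{\varrho})$, which by Proposition~\ref{prop-special-value} equals $E_k^{\psi,\varrho}(tz)=E_k^{\psi,\varrho,t}(z)$ in the ``otherwise'' case. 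For $k>2$ one applies $\xi_{2-k}$ directly to $(k-1)^{-1}y^{k-1}E_{2-k}(tMz,k-1,\overline\psi,\varrho)$, using that $\xi_{2-k}=2iy^{2-k}\overline{\partial_{\overline z}}$ and a short computation with $\omega$ (exploiting \eqref{eq-omega-funct-equation}) to see the holomorphic-in-$\overline z$ Fourier series $\sum a_{2-k}(\cdots)$ maps to the series defining $E_k^{\psi,\varrho}$, after matching the constants $A$, $B$, $C$, $D$ via the relations listed in the proof of Proposition~\ref{prop-special-value}. For $k=2$ with the trivial pair, $\xi_0$ of the difference $y^sE_0(Mz,s)-ty^sE_0(tMz,s)$ at $s=1$ gives $E_2^{\bfo_L,\bfo_M}(z)-tE_2^{\bfo_L,\bfo_M}(tz)$, exactly the definition of $E_2^{\bfo_L,\bfo_M,t}$. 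For $k=1$, $\xi_1$ of the $s$-derivative at $s=0$ equals the $s$-derivative of $\xi_1(y^sE_1(tMz,s,\overline\psi,\varrho))$ only up to a term that vanishes because the inner function is holomorphic (hence $\xi_1$-closed) at $s=0$; carrying this out reproduces $E_1^{\psi,\varrho}(tz)=E_1^{\psi,\varrho,t}(z)$. The main obstacle I anticipate is the $k=1$ case: controlling the interchange of $\xi_1$ with $\partial/\partial s|_{s=0}$ and showing the spurious derivative term drops out requires care with the precise analytic behaviour of $E_1(tMz,s,\overline\psi,\varrho)$ near $s=0$ (this is where the Kudla--Rapoport--Yang trick enters), together with a somewhat delicate bookkeeping of the $\log$-terms produced by differentiating $A_{2-k}(s;\varrho)$, $\omega(\cdot;s,k+s)$, and the Dirichlet $L$-factors; the other three cases are essentially direct substitution into \eqref{FEnhEis} plus the constant-matching already done in Proposition~\ref{prop-special-value}.
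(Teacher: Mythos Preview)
Your proposal is correct and follows essentially the same route as the paper: case-by-case analysis via the Fourier expansion \eqref{FEnhEis}, the $\omega$-identities \eqref{eq-omega-funct-equation}--\eqref{eq-omega-zero}, and the constant-matching from Proposition~\ref{prop-special-value}. The one organizational difference worth noting is that the paper reverses your order: it first computes $\xi_{2-k}(\E^{\psi,\varrho,t}_{2-k})$ explicitly in each case (by writing out the full Fourier expansion of $\E^{\psi,\varrho,t}_{2-k}$ and applying $\xi_{2-k}(\beta_{2-k}(n,y)q^{-n})=-q^n$ term by term), and only then deduces harmonicity in one line via $\Delta_{2-k}=\xi_k\circ\xi_{2-k}$ and the holomorphy of $E_k^{\psi,\varrho,t}$. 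This bypasses your separate eigenvalue/differentiation-of-eigenfunction argument for $\Delta_{2-k}\E=0$, which is correct but unnecessary. In particular, for $k=1$ the paper avoids any abstract interchange-of-limits discussion: it simply differentiates each Fourier term of \eqref{FEnhEis} in $s$ at $s=0$ (exploiting that $\omega(4\pi tny;1+s,s)$ is even in $s$ so its derivative vanishes, and that $B_1(0;\varrho)=0$), reads off the resulting expansion, and applies $\xi_1$ to it. Your anticipated obstacle in the $k=1$ case therefore dissolves once you commit to the fully explicit Fourier computation rather than the abstract $\xi_1\leftrightarrow\partial_s$ interchange.
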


\begin{proof} We start by proving the identity $\xi_{2-k}(\E^{\psi,\varrho,t}_{2-k})=E^{\psi,\varrho,t}_{k}$. Let us first assume that $k>2$, or $k=2$ and $(\psi,\varrho)\neq (\bfo_L,\bfo_M)$. Combining the definition of $\E^{\psi,\varrho,t}_{2-k}$ with the Fourier expansion of $E_k(tMz,s,\psi,\overline{\varrho})$ given by \eqref{FEnhEis}, we deduce
\begin{eqnarray*}
 & & (k-1)\E^{\psi,\varrho,t}_{2-k}(z)\\
 &=& y^{k-1}C_{2-k}(k-1;\overline{\psi},\varrho)
 +(tM)^{1-k}D_{2-k}(k-1;\overline{\psi},\varrho) \nonumber \\
& & +A_{2-k}(k-1;\varrho)(tM)^{1-k}\sum_{n=1}^{\infty}a_{2-k}(k-1;n,\overline{\psi},\varrho)n^{1-k}e^{2\pi i tnz}\omega(4\pi tn y ;1,k-1) \nonumber \\
& & +y^{k-2}B_{2-k}(k-1;\varrho)(tM)^{-1}\sum_{n=1}^{\infty}a_{2-k}(k-1;n,\overline{\psi},\varrho)n^{-1}e^{-2\pi i tn \conj{z}}\omega(4\pi tn y ;k-1,1). \nonumber
\end{eqnarray*}
Now, by \eqref{eq-omega-funct-equation} and \eqref{eq-omega-zero}, we have
\begin{equation}\label{eq-omega-k-1}
\omega(4\pi tn y ;1,k-1) =\omega(4\pi tn y ;2-k,0)=1.
\end{equation}
Furthermore, substituting $u:=r-1$ in the definition \eqref{def_omega} of $\omega(4\pi tn y ;k-1,1)$, we obtain
\begin{eqnarray}\label{eq-omega-k-2}
y^{k-2}\omega(4\pi tn y ;k-1,1)% &=& 4\pi tny^{k-1}\int_0^{\infty} e^{-4\pi ytnu}(u+1)^{k-2}du\\
&=& 4\pi tny^{k-1} e^{4\pi  tn y}\int_1^{\infty} e^{-4\pi tnyr}r^{k-2}dr   \\
&=& 4\pi tne^{4\pi tn y}\beta_{2-k}(tn,y), \nonumber
\end{eqnarray}
where $\beta_{2-k}(tn,y)$ is defined in \eqref{def_beta1}.
Recalling that $y^{k-1}=(1-k)\beta_{2-k}(0,y)$ by $\eqref{def_beta2}$, this gives
\begin{eqnarray}\label{eq-Fou-exp-preimag-kgeneral}
 & & (k-1)\E^{\psi,\varrho,t}_{2-k}(z)\\
 &=& (1-k)\beta_{2-k}(0,y)C_{2-k}(k-1;\overline{\psi},\varrho)+(tM)^{1-k}D_{2-k}(k-1;\overline{\psi},\varrho) \nonumber \\
& & +A_{2-k}(k-1;\varrho)(tM)^{1-k}\sum_{n=1}^{\infty}a_{2-k}(k-1;n,\overline{\psi},\varrho)n^{1-k}q^{tn} \nonumber \\
& & +4\pi B_{2-k}(k-1;\varrho)M^{-1}\sum_{n=1}^{\infty}a_{2-k}(k-1;n,\overline{\psi},\varrho)\beta_{2-k}(tn,y)q^{-tn}. \nonumber
\end{eqnarray}
Since
\begin{equation}\label{eq-xi-beta-exp}
\xi_{2-k}(\beta_{2-k}(n,y)q^{-n})=-q^n \mbox{ for every integer }n\geq 0,
\end{equation}
and since $\xi_{2-k}$ annihilates holomorphic functions, we conclude from \eqref{eq-Fou-exp-preimag-kgeneral} that
\begin{eqnarray*}
\xi_{2-k}\left(\E^{\psi,\varrho,t}_{2-k}\right)(z)
 &=& \overline{C_{2-k}(k-1;\overline{\psi},\varrho)} \nonumber \\
& & -4\pi (k-1)^{-1}\overline{B_{2-k}(k-1;\varrho)}M^{-1}\sum_{n=1}^{\infty}\overline{a_{2-k}(k-1;n,\overline{\psi},\varrho)}q^{tn}. \nonumber
\end{eqnarray*}
Finally, using the definition of the Fourier coefficients given in Sections \ref{section-hol-Eis} and \ref{section-non-hol-Eis}, we get the identities
\begin{eqnarray*}
 \overline{C_{2-k}(k-1;\overline{\psi},\varrho)} &=& C_k(\psi,\varrho),\\
 -4\pi (k-1)^{-1}\overline{B_{2-k}(k-1;\varrho)}M^{-1}&=&\left(-\frac{2\pi i}{M}\right)^k \frac{ W(\overline{\varrho^0})}{(k-1)!},\\
 \overline{a_{2-k}(k-1;n,\overline{\psi},\varrho)}&=&  \sigma_{k-1}^{\psi,\varrho}(n) ,
\end{eqnarray*}
which can easily be checked case by case. All in all, this proves the equality $\xi_{2-k}(\E^{\psi,\varrho,t}_{2-k})=E_{k}^{\psi,\varrho,t}$, as asserted.\\
Let us now assume that $k=2$ and $(\psi,\varrho)=(\bfo_L,\bfo_M)$. Then, the function $y^sE_0(tMz,s,\bfo_L,\bfo_M)$ has a simple pole at $s=1$ coming from the second term in the Fourier expansion \eqref{FEnhEis}. We have
\begin{eqnarray*}
 & & \lim_{s\to 1 } \left(y^{1-s}M^{1-2s}D_0(s;\bfo_L,\bfo_M)-ty^{1-s}(tM)^{1-2s}D_0(s;\bfo_L,\bfo_M)\right) \\
 &=& \frac{\pi}{M}  \lim_{s\to 1}(1-t^{2-2s})L(2s-1,\bfo_L) \prod_{p\mid M}(1-p^{-1})\\
 &=& \frac{\pi}{M}  \lim_{s\to 1}(1-t^{2-2s})\zeta(2s-1) \prod_{p\mid L}(1-p^{-1}) \prod_{p\mid M}(1-p^{-1})\\
 &=& \frac{\pi}{M}\pi \log(t)\prod_{p\mid L}(1-p^{-1}) \prod_{p\mid M}(1-p^{-1}).
\end{eqnarray*}
Using the Fourier expansion \eqref{FEnhEis} together with the identities \eqref{eq-omega-k-1} and \eqref{eq-omega-k-2}, this yields the Fourier expansion
\begin{eqnarray*}
  \E_0^{\bfo_L,\bfo_M,t}(z)
 &=& -(1-t)C_0(1;\bfo_L,\bfo_M)\beta_0(0,y)+\frac{\pi}{M} \log(t)\prod_{p\mid L}(1-p^{-1}) \prod_{p\mid M}(1-p^{-1})  \\
& & +A_0(1;\bfo_M)\frac{1}{M}\sum_{n=1}^{\infty}a_0(1;n,\bfo_L,\bfo_M)n^{-1}(q^n-q^{tn}) \nonumber \\
& & + B_0(1;\bfo_M)\frac{4\pi}{M}\sum_{n=1}^{\infty}a_0(1;n,\bfo_L,\bfo_M)(q^{-n}\beta_0(n,y)-tq^{-tn}\beta_0(tn,y)). \nonumber
\end{eqnarray*}
Since $C_0(1;\bfo_L,\bfo_M)=C_2(\bfo_L,\bfo_M)$, $A_0(1;\bfo_M)=B_0(1;\bfo_M)=\frac{\pi}{M}$ and $a_0(1;n,\bfo_L,\bfo_M)=\sigma_1^{\bfo_L,\bfo_M}(n)$, we get
\begin{eqnarray}\label{eq-Fou-exp-preimage-wt2-triv-char}
 \hspace{0.5cm} \E_0^{\bfo_L,\bfo_M,t}(z)
 &=& -(1-t)C_2(\bfo_L,\bfo_M) \beta_0(0,y)+\frac{\pi}{M} \log(t)\prod_{p\mid L}(1-p^{-1}) \prod_{p\mid M}(1-p^{-1})  \\
& & +\frac{\pi}{M^2}\sum_{n=1}^{\infty}\sigma_1^{\bfo_L,\bfo_M}(n)n^{-1}(q^n-q^{tn}) \nonumber \\
& & +\frac{4\pi^2}{M^2}\sum_{n=1}^{\infty}\sigma_1^{\bfo_L,\bfo_M}(n)(q^{-n}\beta_0(n,y)-tq^{-tn}\beta_0(tn,y)). \nonumber
\end{eqnarray}
As before, we use \eqref{eq-xi-beta-exp} and obtain
\begin{equation*}
 \xi_0\left( \E_0^{\bfo_L,\bfo_M,t}\right)(z)
 =(1-t)C_2(\bfo_L,\bfo_M) -\frac{4\pi^2}{M^2}\sum_{n=1}^{\infty}\sigma_1^{\bfo_L,\bfo_M}(n)(q^{n}-tq^{tn}).
\end{equation*}
This proves that $\xi_0( \E_0^{\bfo_L,\bfo_M,t})=E^{\psi,\varrho,t}_2$, as claimed.\\
Finally, let us assume that $k=1$. Then, $\E^{\psi,\varrho,t}_{1}(z)=\frac{\partial}{\partial s}\left(y^{s}E_{1}(tMz,s,\overline{\psi},\varrho)\right)\big|_{s=0}$ and we start by deducing from  \eqref{FEnhEis} the following Fourier expansion 
\begin{eqnarray}\label{eq-four-exp-preimage-wt1}
 & & y^{s}E_1(tMz,s,\overline{\psi},\varrho)\\
 &=& y^sC_1(s;\overline{\psi},\varrho)+(yt^2M^2)^{-s}D_1(s;\overline{\psi},\varrho) \nonumber \\
& & +A_1(s;\varrho)(tM)^{-s}\sum_{n=1}^{\infty}a_1(s;n,\overline{\psi},\varrho)n^{-s}e^{2\pi i tnz}\omega(4\pi tn y ;1+s,s) \nonumber \\
& & +y^{-1}B_1(s;\varrho)(tM)^{-s-1}\sum_{n=1}^{\infty}a_1(s;n,\overline{\psi},\varrho)n^{-s-1}e^{-2\pi i tn \conj{z}}\omega(4\pi tn y ;s,1+s). \nonumber
\end{eqnarray}
We have the identity
\begin{eqnarray}\label{eq-derivative-C}
\frac{\partial}{\partial s}\left(y^sC_1(s;\overline{\psi},\varrho)\right)\big|_{s=0}&=&
\left\{
\begin{array}{ll}
\log(y)L(1,\varrho)+2L'(1,\varrho), & \mbox{ if }\psi=\bfo_1,\\
0, & \mbox{ if }\psi \neq \bfo_1,
\end{array}\right..
\end{eqnarray}
Next, we compute
\begin{eqnarray*}
\frac{\partial}{\partial s}\left((yt^2M^2)^{-s}D_1(s;\overline{\psi},\varrho)\right)\big|_{s=0}&=&
\sqrt{\pi}i\Bigl( L(0,\overline{\psi})\left(\sqrt{\pi}\log(yt^2M^2)-\Gamma'(\tfrac{1}{2})+\sqrt{\pi}\Gamma'(1)\right)\\
& & -\sqrt{\pi}2L'(0,\overline{\psi}) \Bigr)\prod_{p\mid M}(1-p^{-1}).
\end{eqnarray*}
Using the well-known identities
\begin{equation}\label{eq-Gamma-derivative-values}
\Gamma'(\tfrac{1}{2})=-\sqrt{\pi}(2\log(2)+\gamma)\quad \mbox{ and }\quad \Gamma'(1)=-\gamma,
\end{equation}
where $\gamma$ denotes Euler's constant (see, e.g., \cite{GRA}, formulas 8.366-1 and 8.366-2), we get
\begin{eqnarray}\label{eq-derivative-D}
& & \frac{\partial}{\partial s}\left((yt^2M^2)^{-s}D_1(s;\overline{\psi},\bfo_M)\right)\big|_{s=0}\\
&=&
\pi i\Bigl( L(0,\overline{\psi})\left(\log(yt^2M^2)+2\log(2)\right) -2L'(0,\overline{\psi}) \Bigr)\prod_{p\mid M}(1-p^{-1}), \nonumber
\end{eqnarray}
while
\begin{equation}\label{eq-derivative-D-trivial}
\frac{\partial}{\partial s}\left((yt^2M^2)^{-2s}D_1(s;\overline{\psi},\varrho)\right)\big|_{s=0}=0, \quad\mbox{ if }\varrho\neq \bfo_M. 
\end{equation}
Now, by \eqref{eq-omega-funct-equation}, we have that $\omega(4\pi tn y ;1+s,s)$ is invariant under $s\mapsto -s$, hence its derivative with respect to $s$ vanishes at $s=0$. Therefore, using \eqref{eq-omega-zero} we conclude that
\begin{equation}\label{eq-detivative-A}
\frac{\partial}{\partial s}\left(A_1(s;\varrho)a_1(s;n,\overline{\psi},\varrho)(tMn)^{-s}\omega(4\pi tn y ;1+s,s)\right)\big|_{s=0}
\end{equation}
equals
$$\frac{\partial}{\partial s}\left(A_1(s;\varrho)a_1(s;n,\overline{\psi},\varrho)(tMn)^{-s}\right)\big|_{s=0}.$$
Using the definitions of $A_1(s;\varrho)$ and $a_1(s;n,\overline{\psi},\varrho)$ given in Section \ref{section-non-hol-Eis}, we get that \eqref{eq-detivative-A} equals
\begin{eqnarray}\label{eq-derivative-A-formula}
 & & -\frac{2\pi i}{M}  W(\varrho^0)\bigg( \sigma_0^{\overline{\psi},\overline{\varrho}}(n) \left(\log\left(\frac{\pi}{M}\right)-\Gamma'(1)-\log(tMn)\right) \\
 & &+2 \sum_{0<c\mid n}\overline{\psi}\left(\frac{n}{c}\right)\log(c)
\sum_{0<d\mid \gcd(\ell_{\varrho},c)}d\, \mu\left(\frac{\ell_{\varrho}}{d}\right)
\varrho^0\left(\frac{\ell_{\varrho}}{d}\right)\overline{\varrho^0}\left(\frac{c}{d}\right) \bigg).\nonumber
\end{eqnarray}
Furthermore, one has that
\begin{equation}\label{eq-derivative-B}
\frac{\partial}{\partial s}\left(y^{-1}B_1(s;\varrho)a_1(s;n,\overline{\psi},\varrho)(tMn)^{-1-s}\omega(4\pi tn y ;s,1+s)\right)\big|_{s=0}
\end{equation} 
equals
$$\left(\frac{\partial}{\partial s}B_1(s;\varrho)\big|_{s=0}\right)a_1(0;n,\overline{\psi},\varrho)(tMny)^{-1}\omega(4\pi tn y ;0,1),$$
since $B_1(0;\varrho)=0$. We conclude, using the definitions of $B_1(s;\varrho)$ and $a_1(0;n,\overline{\psi},\varrho)$ given in Section \ref{section-non-hol-Eis}, together with \eqref{eq-omega-k-2}, that \eqref{eq-derivative-B}  equals
\begin{eqnarray}\label{eq-derivative-B-formula}
-\frac{2\pi i}{M}\varrho(-1)W(\varrho^0)\sigma_0^{\overline{\psi},\overline{\varrho}}(n)\beta_1(tn,y)e^{4\pi t n y}.
\end{eqnarray}
Summing up, using \eqref{eq-xi-beta-exp}, the identity $\varrho(-1)\overline{W(\varrho^0)}=W(\overline{\varrho^0})$, and recalling that $\log(y)=-\beta_1(0,y)$ by \eqref{def_beta2}, we obtain
\begin{equation*}
 \xi_1\left(\E_1^{\psi,\varrho,t}\right)(z)= C_1(\psi,\varrho)+D(\psi,\varrho) -\frac{2\pi i}{M}W(\overline{\varrho^0})\sum_{n=1}^{\infty}\sigma_0^{\psi,\varrho}(n)q^{tn}.
\end{equation*} 
This proves that $\xi_1(\E_1^{\psi,\varrho,t})=E_1^{\psi,\varrho,t}$, as asserted.\\
We have thus shown that $\xi_{2-k}(\E_{2-k}^{\psi,\varrho,t})=E_k^{\psi,\varrho,t}$ in all possible cases. This automatically implies that $\E_{2-k}^{\psi,\varrho,t}(z)$ is harmonic. Indeed, by \eqref{eq-Delta-xi} we have
$$\Delta(\E_{2-k}^{\psi,\varrho,t})=\xi_k\circ \xi_{2-k}\left(\E_{2-k}^{\psi,\varrho,t}\right)=\xi_k\left(E_k^{\psi,\varrho,t}\right)=0.$$
The fact that $\E_{2-k}^{\psi,\varrho,t}(z)$ transforms like a modular form in $M_{2-k}(LM,\overline{\psi \varrho})$ follows from the modularity of the function $y^{s}E_{2-k}(tMz,s,\overline{\psi},\varrho)$. Finally, since $E_k^{\psi,\varrho,t}$ is holomorphic at every cusp, we conclude that $\E_{2-k}^{\psi,\varrho,t}(z)\in H_{2-k}(LM,\overline{\psi \varrho})$. This completes the proof of the theorem.
\end{proof}

\begin{remark}
Note that the series \eqref{def-non-hol-Eis-series} converges absolutely and uniformly for $s$ in any fixed compact subset of the half-plane $\Re(s)>1-\frac{k}{2}$. In particular, when $k>2$, one can compute $\xi_{2-k}(\E^{\psi,\varrho,t}_{2-k})$ by applying $\xi_{2-k}$ to each term in this series and then evaluating at $s=k-1$. This argument, however, needs some adjustments in the cases $k=1$ and $k=2$. We avoided this issue by working directly with the Fourier expansions in the proof of Theorem \ref{teo-preimage}. Most of the computations will be re-used in the next section, where we shall give the Fourier expansion of the non-holomorphic part of $\E^{\psi,\varrho,t}_{2-k}(z)$.
\end{remark}

\section{Proof of Theorems  \ref{teo-fourier-exp-mock-geq2}, \ref{teo-fourier-exp-mock-2,I,I} and \ref{teo-four-exp-1}}\label{sect-four-expansions}

In this section we describe the Fourier coefficients in the expansion 
\begin{eqnarray*}
\E^{\psi,\varrho,t}_{2-k}(z)&=&\sum_{n=0}^{\infty}c_{2-k}^+(n,t,\psi,\varrho)q^n -\sum_{n=0}^{\infty}\overline{c_k(n,t,\psi,\varrho)}\beta_{2-k}(n,y)q^{-n}.
\end{eqnarray*}
Note that, by Theorem \ref{teo-preimage}, the coefficients $c_k(n,t,\psi,\varrho)$ are simply the Fourier coefficients of $E^{\psi,\varrho,t}_{k}(z)$, which can be obtained by using the formulas in Section \ref{section-hol-Eis}. For this reason, we shall restrict our attention to the non-holomorphic coefficients $c_{2-k}^+(n,t,\psi,\varrho)$. 

In what follows, we will frequently refer to the computations given in the proof of Theorem \ref{teo-preimage}. In doing so, we only have to take into account the holomorphic terms, since the non-holomorphic terms only contribute to the coefficients $c_{k}(n,t,\psi,\varrho)$. 

\begin{proof}[Proof of Theorem \ref{teo-fourier-exp-mock-geq2}]
By \eqref{eq-Fou-exp-preimag-kgeneral} we have
\begin{eqnarray*}
\sum_{n=0}^{\infty}c_{2-k}^+(n,t,\psi,\varrho)q^n &=&(tM)^{1-k}\bigg( \dfrac{D_{2-k}(k-1;\overline{\psi},\varrho)}{k-1} \\
& & +\frac{A_{2-k}(k-1;\varrho)}{k-1}\sum_{n=1}^{\infty}a_{2-k}(k-1;n,\overline{\psi},\varrho)n^{1-k}q^{tn}\bigg).
\end{eqnarray*}
But
$$\frac{D_{2-k}(k-1;\overline{\psi},\varrho)}{k-1}=\left\{
\begin{array}{ll}
\sqrt{\pi}i^{k-2}\dfrac{\Gamma\left(\frac{k-1}{2}\right)\Gamma\left(\frac{k}{2}\right)}{(k-1)!}L(k-1,\overline{\psi})\prod_{p\mid M}(1-p^{-1}), & \mbox{ if }\varrho= \bfo_M,\\
0, & \mbox{ if }\varrho \neq \bfo_M
\end{array}
\right.$$
By Legendre's duplication formula (see, e.g., \cite{GRA}, formula 8.335-1), we have
$$\Gamma(\tfrac{k-1}{2})\Gamma(\tfrac{k}{2})=2^{2-k}\sqrt{\pi}\Gamma(k-1),$$
hence
$$\frac{D_{2-k}(k-1;\overline{\psi},\varrho)}{k-1}=\left\{
\begin{array}{ll}
\dfrac{ 2^{2-k}\pi i^{k-2}}{k-1}L(k-1,\overline{\psi})\prod_{p\mid M}(1-p^{-1}), & \mbox{ if }\varrho= \bfo_M,\\
0, & \mbox{ if }\varrho \neq \bfo_M
\end{array}
\right.$$
This proves the formula for $c_{2-k}^+(0,t,\psi,\varrho)$. The formula for $c_{2-k}^+(n,t,\psi,\varrho)$ follows from the identities
\begin{align*}
A_{2-k}(k-1;\varrho)&=\frac{2^{2-k}\pi i^{k-2}}{M}W(\varrho^0),\\
a_{2-k}(k-1;n,\overline{\psi},\varrho)&= \sigma_{k-1}^{\overline{\psi},\overline{\varrho}}(n)
\end{align*}
together with the relation
$$c_{2-k}^+(tn,t,\psi,\varrho)=(tM)^{1-k}\frac{A_{2-k}(k-1;\varrho)}{k-1}
a_{2-k}(k-1;n,\overline{\psi},\varrho)n^{1-k}, \mbox{ for } n>0,$$
and $c_{2-k}^+(n,t,\psi,\varrho)=0$ if $t\nmid n$. This completes the proof of the theorem.
\end{proof}

\begin{proof}[Proof of Theorem \ref{teo-fourier-exp-mock-2,I,I}]
By \eqref{eq-Fou-exp-preimage-wt2-triv-char} we have
\begin{equation*}
\sum_{n=0}^{\infty}c_{0}^+(n,t,\bfo_L,\bfo_M)q^n
 =\frac{\pi \log(t)}{M} \prod_{p\mid L}(1-p^{-1}) \prod_{p\mid M}(1-p^{-1})  \\
+\frac{\pi}{M^2}\sum_{n=1}^{\infty}\sigma_1^{\bfo_L,\bfo_M}(n)n^{-1}(q^n-q^{tn}). \nonumber 
\end{equation*}
This already implies the desired formula for $c_{0}^+(0,t,\bfo_L,\bfo_M)$. For $n>0$, this also implies the relation
$$c_{0}^+(tn,t,\bfo_L,\bfo_M)=\frac{\pi}{M^2}\sigma_1^{\bfo_L,\bfo_M}(tn)(tn)^{-1}-\frac{\pi}{M^2}\sigma_1^{\bfo_L,\bfo_M}(n)n^{-1},$$
and
$$c_{0}^+(n,t,\bfo_L,\bfo_M)=\frac{\pi}{M^2}\sigma_1^{\bfo_L,\bfo_M}(n)n^{-1}, \mbox{ if }t\nmid n.$$
This gives the desired formula for $c_{0}^+(n,t,\bfo_L,\bfo_M)$ and completes the proof of the theorem.
\end{proof}

\begin{proof}[Proof of Theorem \ref{teo-four-exp-1}]
The formula for $c_{1}^+(0,t,\psi,\varrho)$ follows from \eqref{eq-four-exp-preimage-wt1}, together with 
\eqref{eq-derivative-C}, \eqref{eq-derivative-D} and \eqref{eq-derivative-D-trivial}. Further, from \eqref{eq-four-exp-preimage-wt1}, together with the fact that \eqref{eq-detivative-A} equals \eqref{eq-derivative-A-formula} and \eqref{eq-derivative-B} equals \eqref{eq-derivative-B-formula}, we get 
\begin{eqnarray*}
\sum_{n=1}^{\infty}c_{1}^+(n,t,\psi,\varrho)q^n &=&
-\frac{2\pi i }{M}W(\varrho^0) \sum_{n=1}^{\infty} \bigg( \sigma_0^{\overline{\psi},\overline{\varrho}}(n) \left(\log\left(\frac{\pi}{M}\right)-\Gamma'(1)-\log(tnM)\right) \\
 & &+2 \sum_{0<c\mid n}\overline{\psi}\left(\frac{n}{c}\right)\log(c)
\sum_{0<d\mid \gcd(\ell_{\varrho},c)}d\, \mu\left(\frac{\ell_{\varrho}}{d}\right)
\varrho^0\left(\frac{\ell_{\varrho}}{d}\right)\overline{\varrho^0}\left(\frac{c}{d}\right) \bigg)q^{tn}.\nonumber
\end{eqnarray*}
The desired formula for $c_{1}^+(n,t,\psi,\varrho)$, when $n>0$, follows from this together with \eqref{eq-Gamma-derivative-values}. This completes the proof of the theorem.
\end{proof}

\section{Examples}\label{sec-examples}

We shall now illustrate the results of Sections \ref{sect-const-preimage} and \ref{sect-four-expansions}  by giving some simple and nice examples.

\subsection{Level one Eisenstein series} For every even integer $k>2$ we have the normalized (i.e.~with constant Fourier coefficient equal to one) Eisenstein series of weight $k$
$$E_k(z):=1-\frac{2k}{B_k}\sum_{n=1}^{\infty}\sigma_{k-1}(n)q^n,$$
which is a modular form in $M_k(1,\bfo_1)$. Here $B_k$ denotes the $k$-th Bernoulli number and $\sigma_{k-1}(n):=\sigma_{k-1}^{\bfo_1,\bfo_1}(n)$.
Since $\zeta(k)=-\tfrac{B_k (2\pi i)^k }{2\, k!}$ (see, e.g., \cite{MIY}, Theorem 3.2.3, p.~90), one has the identity
$$E_k^{\bfo_1,\bfo_1,1}(z)=\zeta(k)E_k(z).$$
Hence, it follows from Theorem \ref{teo-fourier-exp-mock-geq2} that the function
$$\widetilde{E}_k(z):=\frac{1}{\zeta(k)}G^{\bfo_1,\bfo_1,1}_{2-k}(z)=-\frac{ 2^{2-k}\pi i^{k}}{\zeta(k)(k-1)}\bigg(\zeta(k-1)+\sum_{n=1}^{\infty}n^{1-k}\sigma_{k-1}(n)q^n\bigg)$$
is a mock modular form whose shadow equals $E_k(z)$. As mentioned in the introduction, these forms have also been constructed in \cite{BGKO}. Our formulas agree with theirs.

\subsection{Hecke's Eisenstein series associated to imaginary quadratic fields}\label{sect-Eis-wt-one}

Let $K=\QQ(\sqrt{D})$ be an imaginary quadratic field of discriminant $D<0$. Let us denote by $\mathcal{O}_D$ the ring of integers of $K$, by $u(D)$ the number of units in this ring, and by $h(D)$ its class number. For an integer $n>0$ we define $R_D(n)$ to be the number of integral ideals of norm $n$ in $\mathcal{O}_D$. One can then define Hecke's Eisenstein series as
$$E_{1,D}(z):=\frac{h(D)}{u(D)}+\sum_{n=1}^{\infty}R_D(n)q^n.$$
This is a holomorphic modular form in $M_1(|D|,\psi_D)$, where $\psi_D$ denotes the quadratic Dirichlet character associated to the field extension $\QQ(\sqrt{D})$ of $\mathbb{Q}$ (more concretely, one has $\psi_D(n)=\left(\frac{D}{n}\right)$, where $\left(\frac{\cdot}{\cdot }\right)$ denotes the Kronecker symbol).  By using the identity
\begin{equation}\label{eq-cont-ideals}
R_D(n)=\sum_{0<c\mid n}\psi_D(c),
\end{equation}
which is equivalent to the factorization $\zeta_K(s)=\zeta(s)L(\psi_D,s)$, where $\zeta_K(s)$ denotes the Dedekind zeta function associated to $K$ (see, e.g., \cite{COH}, Proposition 10.5.5, p.~219), and the class number formula
\begin{equation}\label{eq-class-number-formula}
L(0,\psi_D)=\frac{2h(D)}{u(D)}
\end{equation}
(see, e.g., \cite{COH}, part (3) of Theorem 10.5.1, p.~217), we get
$$E_1^{\psi_D,\bfo_1,1}(z)=-2\pi i E_{1,D}(z).$$
By Theorem \ref{teo-preimage} the function
$$\widetilde{E}_{1,D}(z):=(2\pi i)^{-1}G_1^{\psi_D,\bfo_1,1}(z)+(\log(\pi)+\gamma)E_{1,D}(z)$$
is a mock modular form whose shadow equals $E_{1,D}(z)$. Using Theorem \ref{teo-four-exp-1}, together with \eqref{eq-cont-ideals} and \eqref{eq-class-number-formula}, one obtains the Fourier expansion
$$\widetilde{E}_{1,D}(z)=\sum_{n=0}^{\infty}R^+_D(n)q^n,$$
where
$$R^+_D(0):=\left(2\log(2)+\log(\pi)+\gamma\right)\frac{h(D)}{u(D)}-L'(0,\psi_D),$$
and
$$R^+_D(n):=R_D(n)\log(n)-2\sum_{0<c\mid n}\psi_D(\tfrac{n}{c})\log(c), \mbox{ for }n>0.$$

The coefficients $R_D^+(n)$ have first been computed adelically  by Kudla, Rapoport and Yang  in the case where  $|D|>3$ is prime and $|D|\equiv 3$ mod 4 (see \cite{KRY99}, Theorem 1, p.~349), and by Schofer in the case where $|D|>3, |D|\equiv 3$ mod 4 (see \cite{SCH}, Theorem 4.1, pp.~30-31). The computation of $R_D^+(n)$ in the general case can be found in \cite{KY10} (Theorem 7.2 on p.~2305).  The mock modular form that one obtains from \cite{KRY99} equals $\widetilde{E}_{1,D}(z)-\log(|D|)E_{1,D}(z)$ in our notation. The fact that our  formulas agree with theirs follows from the following result. We define $$\Lambda(s,\psi_D):=\pi^{-(s+1)/2}\Gamma(\tfrac{s+1}{2})L(s,\psi_D)$$ and use the notation $\ord_p(n)$ for the $p$-adic order of an integer $n$.

\begin{proposition}
We have
$$R^+_D(0)=\frac{2h(D)}{u(D)}\left(\frac{\Lambda'(1,\psi_D)}{\Lambda(1,\psi_D)}+\log(|D|)\right),$$
and for $n>0$ we have the identity
$$R^+_D(n)=-R_D(n)\sum_{\substack{ p\mid n, \\ \psi_D(p)=0}}\log(p)\ord_p(n)-\sum_{\substack{p\mid n,\\ \psi_D(p)=-1}}\log(p)(\ord_p(n)+1)R_D(\tfrac{n}{p}),$$
where both sums run only over the prime divisors of $n$.
\end{proposition}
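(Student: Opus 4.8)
The plan is to deduce the two formulas for $R^+_D(n)$ directly from the explicit expressions already obtained, namely
$$R^+_D(0)=\left(2\log(2)+\log(\pi)+\gamma\right)\frac{h(D)}{u(D)}-L'(0,\psi_D),\qquad R^+_D(n)=R_D(n)\log(n)-2\sum_{0<c\mid n}\psi_D(\tfrac{n}{c})\log(c)\ \ (n>0),$$
together with standard facts about the completed Dirichlet $L$-function $\Lambda(s,\psi_D)$ and the multiplicativity of $R_D$. For the constant term, I would first differentiate logarithmically the functional equation $\Lambda(s,\psi_D)=\Lambda(1-s,\psi_D)$ (valid since $\psi_D$ is a primitive even/odd character of conductor $|D|$ with root number $+1$; here $\psi_D(-1)=-1$, so the ``$+1$'' shape comes from the odd functional equation) and evaluate at $s=0$ and $s=1$ to express $\Lambda'(1,\psi_D)/\Lambda(1,\psi_D)$ in terms of $L'(0,\psi_D)/L(0,\psi_D)$ and elementary constants coming from $\pi^{-(s+1)/2}\Gamma(\tfrac{s+1}{2})$. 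Plugging in $\Gamma'(\tfrac12)/\Gamma(\tfrac12)=-2\log 2-\gamma$, $\log\pi$ factors, and $L(0,\psi_D)=2h(D)/u(D)$ via the class number formula \eqref{eq-class-number-formula}, the claimed identity for $R^+_D(0)$ should drop out after collecting terms; the $\log|D|$ appears precisely from the $|D|^{s-1/2}$ (or equivalently $\pi^{-s}|D|^{s}$-type) factor in the functional equation.

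For $n>0$ the argument is purely elementary and multiplicative. Writing $T(n):=\sum_{0<c\mid n}\psi_D(\tfrac{n}{c})\log(c)$, I would note that $n\mapsto R_D(n)=\sum_{c\mid n}\psi_D(c)$ is multiplicative and that $T$ is the Dirichlet convolution of $\psi_D$ with the non-multiplicative function $\log$, so it satisfies $T(mn)=R_D(m)T(n)+T(m)R_D(n)$ whenever $\gcd(m,n)=1$ — the usual ``derivation'' identity for convolutions against $\log$. The same identity holds for $R^+_D(n)=R_D(n)\log n-2T(n)$ with the $R_D$-weighted Leibniz rule, so both sides of the asserted formula are ``$R_D$-derivations'' and it suffices to check equality on prime powers $n=p^a$. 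On a prime power one evaluates $R_D(p^a)$ and $T(p^a)$ explicitly in the three cases $\psi_D(p)\in\{0,1,-1\}$: when $\psi_D(p)=1$ one gets $R_D(p^a)=a+1$ and $T(p^a)=\log p\sum_{j=0}^a j=\tfrac12 a(a+1)\log p$, so $R^+_D(p^a)=(a+1)a\log p-a(a+1)\log p=0$, matching the right-hand side (no prime of split type contributes); when $\psi_D(p)=0$ one gets $R_D(p^a)=1$, $T(p^a)=a\log p$, hence $R^+_D(p^a)=a\log p-2a\log p=-a\log p=-R_D(p^a)\log(p)\ord_p(p^a)$; when $\psi_D(p)=-1$ one gets $R_D(p^a)=1$ if $a$ even and $0$ if $a$ odd, together with $R_D(p^{a-1})$ having the complementary parity, and a short computation of $T(p^a)$ gives exactly $R^+_D(p^a)=-\log(p)(a+1)R_D(p^{a-1})$, matching the inert term. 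Extending from prime powers to general $n$ by the Leibniz/derivation property then finishes the case $n>0$.

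I expect the only genuinely delicate point to be the constant-term computation: one must be careful to use the functional equation for the \emph{odd} primitive character $\psi_D$ (so that the gamma factor is $\pi^{-(s+1)/2}\Gamma(\tfrac{s+1}{2})$ and the root number is $W(\psi_D)/(i\sqrt{|D|})=1$), and to track the $\log|D|$, $\log\pi$, $\log 2$ and $\gamma$ contributions without sign errors; the parenthetical coefficient $2\log 2+\log\pi+\gamma$ in $R^+_D(0)$ is exactly what the gamma-factor logarithmic derivative at $s=1$ produces, which is a reassuring consistency check. The $n>0$ case, by contrast, is routine once the convolution-Leibniz observation is in place, so I would present it compactly, doing the three prime-power cases and then invoking multiplicativity.
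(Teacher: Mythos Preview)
Your proposal is correct and, for the constant term $R_D^+(0)$, is essentially identical to the paper's argument: both compute $\Lambda'(1,\psi_D)/\Lambda(1,\psi_D)$ via the functional equation, reduce to $\Lambda'(0)/\Lambda(0)$, expand using $\Gamma'(\tfrac12)/\Gamma(\tfrac12)=-(2\log 2+\gamma)$, and finish with the class number formula $L(0,\psi_D)=2h(D)/u(D)$.

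For $n>0$ your route and the paper's are closely related but packaged differently. The paper first rewrites $\sum_{c\mid n}\psi_D(n/c)\log c=\log(n)R_D(n)-\sum_{c\mid n}\psi_D(c)\log c$ and then directly expands the latter sum over the prime factorisation $n=\prod p_j^{a_j}$, obtaining each prime's contribution by an explicit computation of $\sum_{b=0}^{a}b\,\psi_D(p^b)$ and $R_D(p^a)$ in the three cases. You instead isolate the structural reason those prime-by-prime computations suffice: both $R_D^+$ and the right-hand side satisfy the Leibniz rule $F(mn)=R_D(m)F(n)+R_D(n)F(m)$ for $\gcd(m,n)=1$, so equality on prime powers implies equality everywhere. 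The underlying case analysis on $p^a$ is then the same in both proofs. Your formulation is slightly more conceptual and avoids the preliminary substitution, while the paper's direct expansion is more hands-on; neither buys a real shortcut over the other, and the actual computations coincide.
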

\begin{proof}
By using the functional equation of $\Lambda(s,\psi_D)$, namely
$$\Lambda(1-s,\psi_D)=\frac{i\, |D|^s}{W(\psi_D)}\Lambda(s,\psi_D)$$
(see, e.g., \cite{COH}, Theorem 10.2.14, p.~173), one gets
$$\frac{\Lambda'(1,\psi_D)}{\Lambda(1,\psi_D)}=-\frac{\Lambda'(0,\psi_D)}{\Lambda(0,\psi_D)}-\log(|D|)=\frac{1}{2}\left(\log(\pi)-\frac{\Gamma'(\frac{1}{2})}{\Gamma(\frac{1}{2})}-2\frac{L'(0,\psi_D)}{L(0,\psi_D)}\right)-\log(|D|).$$
From \eqref{eq-Gamma-derivative-values} and $\Gamma(\tfrac{1}{2})=\sqrt{\pi}$, we see that
$$-\frac{\Gamma'(\frac{1}{2})}{\Gamma(\frac{1}{2})}=2\log(2)+\gamma.$$
This, together with \eqref{eq-class-number-formula}, gives 
$$\frac{\Lambda'(1,\psi_D)}{\Lambda(1,\psi_D)}=\frac{u(D)}{2h(D)}R_D^+(0)-\log(|D|).$$
This implies the desired identity for $R_D^+(0)$. For the second identity, we start by writing
$$\sum_{0<c\mid n}\psi_D(\tfrac{n}{c})\log(c)=\log(n)R_D(n)-\sum_{0<c\mid n}\psi_D(c)\log(c),$$
which gives
$$R_D^+(n)=2\sum_{0<c\mid n}\psi_D(c)\log(c)-\log(n)R_D(n).$$
Writing $n=p_1^{a_1}\cdots p_r^{a_r}$ as a product of positive powers of different primes, and putting $n_j:=n p_j^{-a_j}$, we get
\begin{eqnarray*}
\sum_{0<c\mid n}\psi_D(c)\log(c)&=& \sum_{b_1=0}^{a_1}\cdots \sum_{b_r=0}^{a_r}\psi_D(p_1^{b_1}\cdots p_r^{b_r})\sum_{j=1}^rb_j\log(p_j)\\
&=& \sum_{j=1}^r\log(p_j) R_D(n_j)\sum_{b_j=0}^{a_j}b_j\psi_D(p_j^{b_j}),
\end{eqnarray*}
where we have used \eqref{eq-cont-ideals} for the last equality. A simple computation gives
$$\sum_{b_j=0}^{a_j}b_j\psi_D(p_j^{b_j})=\left\{
\begin{array}{ll}
\frac{a_j}{2}, & \mbox{ if }\psi_D(p_j)=-1 \mbox { and }a_j \mbox{ is even},\\
-\frac{(a_j+1)}{2},  & \mbox{ if }\psi_D(p_j)=-1 \mbox { and }a_j \mbox{ is odd},\\
0, & \mbox{ if }\psi_D(p_j)=0,\\
\frac{a_j(a_j+1)}{2}, & \mbox{ if }\psi_D(p_j)=1.
\end{array}
\right.$$
Similarly, we have
$$\log(n)R_D(n)=\sum_{j=1}^ra_j\log(p_j)R_D(n_j)R_D(p_j^{a_j}),$$
with
$$R_D(p_j^{a_j})=\left\{
\begin{array}{ll}
1, & \mbox{ if }\psi_D(p_j)=-1 \mbox { and }a_j \mbox{ is even},\\
0, & \mbox{ if }\psi_D(p_j)=-1 \mbox { and }a_j \mbox{ is odd},\\
1, & \mbox{ if }\psi_D(p_j)=0,\\
a_j+1, & \mbox{ if }\psi_D(p_j)=1.
\end{array}
\right.$$
Putting all this together, we obtain
$$R_D^+(n)=-\sum_{\substack{ j=1,\ldots,r \\ \psi_D(p_j)=0}}\log(p_j)a_jR_D(n_j)-\sum_{\substack{j=1,\ldots,r\\ \psi_D(p_j)=-1}}\log(p_j)(a_j+1)\delta(a_j)R_D(n_j),$$
where $\delta(a_j)=0$ if $a_j$ is even and $\delta(a_j)=(a_j+1)$ if $a_j$ is odd. If $\psi_D(p_j)=0$, one has the identity $R_D(n_j)=R_D(n)$. Furthermore, when $\psi_D(p_j)=-1$ one has the identity $\delta(a_j)R_D(n_j)=(a_j+1)R_D(\tfrac{n}{p})$. This implies the desired result and completes the proof of the proposition.
\end{proof}

\subsection{Certain theta series of weight one}

In some particular cases one can use the computations in the previous subsection in order to  construct mock modular forms whose shadows are theta series of weight one. More precisely, given an imaginary quadratic field $K=\QQ(\sqrt{D})$ of discriminant $D<0$, one can construct the theta series 
$$\vartheta_D(z)=\sum_{x\in \mathcal{O}_D}e^{2\pi i  (x\overline{x})z}.$$
If $K$ has class number one, which is the case if and only if $$D\in\{-3,-4,-7,-8,-11,-19,-43,-67,-163\},$$ then we have  $\vartheta_D(z)=u(D)E_{1,D}(z)$, and we can use the computations of the previous section to construct a pre-image of $\vartheta_D(z)$. \\
To give one explicit example, we choose $D=-4$, in which case we have
$$\vartheta_{-4}(z)=\Theta^2(z),$$
where 
$$\Theta(z):=\sum_{x\in \Z}q^{x^2}.$$
It follows from our computations, together with the identity
$$4R_{-4}(n)=r_2(n):=\{(x,y)\in \Z^2:x^2+y^2=n\},$$
that
\begin{eqnarray*}
\widetilde{\Theta^2}(z)&:=&2\log(2)+\log(\pi)+\gamma-4\, L'(0,\psi_{-4})\\
& & +\sum_{n=1}^{\infty}\left(r_2(n)\log(n)-8\sum_{0<c\mid n}\psi_{-4}(\tfrac{n}{c})\log(c)\right) q^n
\end{eqnarray*}
is a mock modular form whose shadow equals $\Theta^2(z)$. Note that here one can also use the identity
$$L'(0,\psi_{-4})=\log\left(\frac{\Gamma(\frac{1}{4})}{\Gamma(\frac{3}{4})}\right)-\log(2)$$
(see, e.g., \cite{COH}, Corollary 10.3.2 and Theorem 10.3.5, pp.~188--190).

\subsection{Pre-images of $\Theta^4(z)$, $\Theta^6(z)$ and $\Theta^8(z)$}

A result of Rankin in \cite{RAN} implies that, for $k\geq 1$ an integer, the form $\Theta^{2k}(z)$ is a linear combination of Eisenstein series if and only if $k\in\{ 1,2,3,4\}$. In particular, for $n\geq 1$ one has the identities
\begin{eqnarray*}
r_4(n)&=& 8\left(\sigma_1(n)-4\, \sigma_1(\tfrac{n}{4})\right),\\
r_6(n)&=& 4\left( 4\, \sigma_2^{\psi_{-4},\bfo_1}(n)-\sigma_2^{\bfo_1,\psi_{-4}}(n)\right)  , \\
r_8(n)&=& 16\left(\sigma_3(n)-2\, \sigma_3(\tfrac{n}{2})+16\, \sigma_3(\tfrac{n}{4})\right),
\end{eqnarray*}
where $r_{2k}(n)$ is the number of representations of $n$ as sum of $2k$ squares (see, e.g., \cite{RANbook}, identities (7.4.23), (7.4.24) and (7.4.25), pp.~241-242). %Here we convey that $\sigma^{\psi,\varrho}_{k}(n)=0$ if $n$ is not an integer. 
The above corresponds in our notation to the identities
\begin{eqnarray*}
\Theta^4(z)&=& -\frac{2}{\pi^2}E_2^{\bfo_1,\bfo_1,4}(z),\\
\Theta^6(z)&=& -\frac{4i}{\pi^3}\left( E_3^{\psi_{-4},\bfo_1,1}(z)+8i E_3^{\bfo_1,\psi_{-4},1}(z)\right)  , \\
\Theta^8(z)&=& \frac{6}{\pi^4}\left( E_4^{\bfo_1,\bfo_1,1}(z)-2E_4^{\bfo_1,\bfo_1,2}(z)+16E_4^{\bfo_1,\bfo_1,4}(z)\right),
\end{eqnarray*}
where, for the second equality, we used that $L(3,\psi_{-4})=\dfrac{\pi^3}{2^5}$ (see, e.g., \cite{COH}, p.~189). By using Theorems \ref{teo-fourier-exp-mock-2,I,I} and \ref{teo-fourier-exp-mock-geq2} one obtains the mock modular forms
\begin{eqnarray*}
\widetilde{\Theta^4}(z)&:=&-\frac{1}{2^3\pi}\left(2^4\log(2)+\sum_{n=1}^{\infty}n^{-1}r_4(n)q^n\right)\\
\widetilde{\Theta^6}(z)&:=&-\frac{1}{2^4\pi^2}\left(2^4L(2,\psi_{-4})+\sum_{n=1}^{\infty}n^{-2}\left( r_6(n)+2^3 \, \sigma_{2}^{\bfo_1,\psi_{-4}}(n)\right)q^n\right), \\
\widetilde{\Theta^8}(z) &:=&-\frac{1}{2^4\pi^3}\left(2^3 \zeta(3)+\sum_{n=1}^{\infty}n^{-3}r_8(n)q^n\right) ,
\end{eqnarray*}
whose shadows equal $\Theta^4(z)$, $\Theta^6(z)$ and $\Theta^8(z)$, respectively.

\section*{Acknowledgements}

The authors thank \"Ozlem Imamo\={g}lu, Martin Raum and \'Arp\'ad T\'oth for their comments on a previous version of this paper. Herrero was partially funded by the  Royal Swedish Academy of Sciences.
Von Pippich acknowledges support from the LOEWE research unit 
Uniformized Structures in Arithmetic and Geometry.


\begin{thebibliography}{00}
%% \bibitem[Author(year)]{label}
\bibitem{BFOR}
Bringmann, K., Folsom, A., Ono, K., Rolen, L.:
\emph{Harmonic Maass forms and mock modular forms: theory and applications}. 
American Mathematical Society Colloquium Publications, 64. American Mathematical Society, Providence, RI, 2017.
%
\bibitem{BGKO} Bringmann, K., Guerzhoy, P., Kent, Z., Ono, K.: 
\emph{Eichler--Shimura theory for mock modular forms}. 
Math.~Ann.~\begin{bf}355\end{bf} (2013), no.~3, 1085--1121.
%
\bibitem{BO} Bringmann, K., Ono, K.:
\emph{Lifting cusp forms to Maass forms with an application to partitions}. 
Proc.~Natl.~Acad.~Sci.~USA \begin{bf}104\end{bf} (2007), no.~10, 3725--3731.
%
\bibitem{Bru} Bruinier, J.~H.: 
\emph{Harmonic Maass forms and periods}. 
Math.~Ann.~\begin{bf}357\end{bf} (2013), no.~4, 1363--1387.
%
%
\bibitem{BvdGHZ} Bruinier, J.~H., van der Geer, G., Harder, G., Zagier, D.:
\emph{The 1-2-3 of modular forms}.
Lectures from the Summer School on Modular Forms and their Applications held in Nordfjordeid, June 2004. Edited by Kristian Ranestad. Universitext. Springer-Verlag, Berlin, 2008.
%
\bibitem{BruOn} Bruinier, J.~H., Ono, K.: 
\emph{Heegner divisors, $L$-functions and harmonic weak Maass forms}.  
Ann.~of Math.~(2) \begin{bf}172\end{bf} (2010), no.~3, 2135--2181.
%
\bibitem{BOR} Bruinier, J.~H., Ono, K., Rhoades, R.~C.: 
\emph{Differential operators for harmonic weak Maass forms and the vanishing of Hecke eigenvalues}. 
Math.~Ann.~\begin{bf}342\end{bf} (2008), no.~3, 673--693.
%
\bibitem{COH} Cohen, H.:
\emph{Number theory. Vol. II. Analytic and modern tools}.
Graduate Texts in Mathematics, 240. Springer, New York, 2007.
%
\bibitem{DL15} Duke, W., Li, Y.: 
\emph{Harmonic Maass forms of weight 1}.
Duke Math.~J.~\begin{bf}164\end{bf} (2015), no.~1, 39--113.
%
\bibitem{DiSh} Diamond, F., Shurman, J.~M.: 
\emph{A first course in modular forms}. 
Vol.~228. Springer, New York (2005).
%
\bibitem{GRA} Gradshteyn, I.~S., Ryzhik, I.~M.: 
\emph{Table of integrals, series, and products}.
Eighth edition.
Elsevier/Academic Press, Amsterdam, 2015.
%
\bibitem{KRY99} Kudla, S.~S., Rapoport, M., Yang, T.: 
\emph{On the derivative of an Eisenstein series of weight one}.
Internat. Math. Res. Notices \begin{bf}1999\end{bf}, no.~7, 347--385.  
%
\bibitem{KY10} Kudla, S.~S., Yang, T.:
\emph{Eisenstein series for $\mathrm{SL}(2)$}. 
Sci.~China Math.~\begin{bf}53\end{bf} (2010), no.~9, 2275-2316.
%
\bibitem{YL17} Li, Y.: 
\emph{Harmonic Eisenstein Series of Weight One}. 
In: Bruinier J., Kohnen W. (eds) \emph{$L$-Functions and Automorphic Forms}. Contributions in Mathematical and Computational Sciences, vol. 10. Springer, Cham, 2017.
%
\bibitem{MIY} Miyake, T.: 
\emph{Modular forms}. 
Springer Science \& Business Media, 2006.
%
\bibitem{RAN} Rankin, R.~A.:
\emph{Sums of squares and cusp forms}. 
Amer.~J.~Math.~\begin{bf}87\end{bf} (1965), 857–-860. 
%
\bibitem{RANbook} 
Rankin, R.~A.:
\emph{Modular forms and functions}. 
Cambridge University Press, Cambridge-New York-Melbourne, 1977. 
%
\bibitem{RW}  Rhoades, R.~C., Waldherr, M.: 
\emph{A Maass lifting of $\Theta^3$ and class numbers of real and imaginary quadratic fields}.
Math.~Res.~Lett.~\begin{bf}18\end{bf} (2011), no.~5, 1001--1012.
%
\bibitem{SCH} Schofer, J.:
\emph{Borcherds forms and generalizations of singular moduli}. 
J.~Reine Angew.~Math.~\begin{bf}629\end{bf} (2009), 1--36. 
%
\bibitem{Z} Zwegers, S.~P.: 
\emph{Mock Theta Functions}. 
Utrecht PhD Thesis (2002), ISBN 90-393-3155-3.
%
\end{thebibliography}
\end{document}